\newtheorem{theorem}{Theorem}[section]
\newtheorem{lemma}[theorem]{Lemma}
\newtheorem{corollary}[theorem]{Corollary}
\theoremstyle{definition}
\newtheorem{example}[theorem]{Example}
\newtheorem{definition}[theorem]{Definition}
\numberwithin{equation}{section}
\begin{document}

 \title{Sturm-Liouville problems on graphs with Robin boundary conditions}
 
\author{Yuri Latushkin}\address{University of Missouri, Columbia, USA}\email{latushkiny@missouri.edu}
\author{Vyacheslav Pivovarchik}\address{South Ukrainian National Pedagogical University, Odesa, Ukraine, and\\ University of Vaasa, Finland} \email{vpivovarchik@gmail.com}
\author{Alesia Supranovych}\address{South Ukrainian National Pedagogical University, Odesa, Ukraine} \email{ghgufgchc@gmail.com}

\subjclass{Primary 34B24; Secondary 47E05}

\keywords{Quantum graphs, inverse problems, eigenvalues, asymptotics}

\date{\today}

\dedicatory{To the memory of H.\ Langer}

\begin{abstract} We study characteristic functions and describe asymptotics of the eigenvalues for the spectral Sturm-Liouville problem on graphs equipped with Robin-Kirhhoff boundary conditions. Also, we show how to recover the coefficients in the Robin conditions for the quantum graphs provided the shape of the graphs and some Robin eigenvalues are known.
\end{abstract}

\maketitle

\newcommand{\im}{\operatorname{Im}}
\newcommand{\re}{\operatorname{Re}}
\newcommand{\ki}{k^{-1}}
\newcommand{\sql}{\sqrt{\lambda}}

\newcommand{\wtp}{\widetilde{\psi}}
\newcommand{\whp}{\widehat{\psi}}
\newcommand{\C}{{\bf C}}
\newcommand{\CB}{{\cal B}}
\newcommand{\CH}{{\cal H}}
\newcommand{\CL}{{\cal L}}
\newcommand{\cotanh}{\mbox{cotanh}}
\newcommand{\HB}{{\cal HB}}
\newcommand{\N}{{\bf N}}
\newcommand{\R}{{\bf R}}
\newcommand{\sgn}{\mbox{sgn}}
\newcommand{\SHB}{{\cal SHB}}
\newcommand{\Z}{{\bf Z}}
\newcommand{\za}{\alpha}
\newcommand{\zb}{\beta}
\newcommand{\ze}{\varepsilon}
\newcommand{\zf}{\varphi}
\newcommand{\zg}{\gamma}
\newcommand{\zk}{\kappa}
\newcommand{\zl}{\lambda}
\newcommand{\zo}{\omega}
\newcommand{\zs}{\sigma}
\newcommand{\zz}{\zeta}
\newcommand{\rf}[1]{(\ref{#1})}
\newcommand{\p}{^{\prime}}
\newcommand{\pp}{^{\prime\prime}}
%
%


\section{Introduction.}

A differential operator is a synthesis of three fairly independent objects: 
(1) the set of support of functions on which the differential operator is acting (say, a finite interval, a graph of certain shape, a multidimensional domain, or a manifold), (2) the differential operation itself (determined, say, by the potential in the Sturm-Liouville problems), and (3) the domain of the operator acting in a suitably chosen function space (which is usually determined by boundary conditions). In this paper we will deal with Sturm-Lioville  problems on graphs, and in this context, respectively, one anticipates three fairly independent \textit{inverse problems} to be considered: Given the spectrum (or any specific spectral data) of the Sturm-Lioville operator, find: (1) the shape of the graph, (2) the potential, (3) the parameters in the boundary conditions. In recent years problems (1) and (2) have been addressed in a vast amount of sources and attracted attention of many. However, problem (3) to the best of our knowledge was considered only by a very few authors, and it is the main objective of this paper to attract attention to this topic.

We begin with a brief account of the history of Sturm-Liouville inverse problems.  For  a \textit{finite interval} it starts with the celebrated Ambarzumians Theorem \cite{A}, continues at  Borg's paper  \cite{Bo}, and culminates in the classical work on the solution of the inverse problem given by Levitan and Gasymov \cite{LG} and Marchenko \cite{Ma}. 
The inverse Sturm-Liuville problems on \textit{metric graphs} were first considered by Gerasimenko \cite{G} and Exner and Seba \cite{ES} who constructed the potential of the Sturm-Liouville equation using given spectral data. This marks the beginning of research on problem (2) from the above mentioned list of inverse problems on graphs. The analysis of the inverse problems of type (2) was further continued  in \cite{ALTW, BW, CP, D, GW, P1, P2, P3,RS, VMS}.
The work on the inverse problems of type (1) began by Gutkin and Smilansky in \cite{GS} and by von Below in \cite{Be}. 
Gutkin  and Smilansky showed that if the lengths of the edges are non-commensurate then the spectrum of the quantum graph (that is, the second order derivative operator on the graph with standard boundary conditions) uniquely determines the shape of the graph. Von Below considered the opposite case and showed that there exist co-spectral graphs, i.e., non-isomorphic graphs with the same spectrum of a Sturm-Liouville problem.   
Many interesting results on classes of cospectral graphs can be found in \cite{BPB}, \cite{BG}, \cite{FLP}. In \cite{KN} (see also \cite{BKS}) a ``geometric" Ambarzumian Theorem was proved stating that the unperturbed spectrum uniquely determines the shape of a   $P_2$ graph.   In \cite{ChP} this result was generalized to the case of simple connected equilateral  graphs with the number of vertices not exceeding five and to the case of trees with the number of vertices not exceeding eight. An example of co-spectral trees with nine vertices was given in \cite{Pis}.

The inverse problems of type (3) on graphs appear to be significantly  less popular. In general, the problem could be formulated as follows: Suppose that the shape of a graph is known, that the potentials on the edges are known (in particular, they may be known to be identically zero), and we are given spectral data and need to find unknown constants in the boundary and matching conditions.  We are aware of only very few papers on this topic: \cite{C} and \cite{AKN}, and very recent work in \cite{KKM} and \cite{PS25}. 

In the current paper we consider a general Robin problem  on compact graphs with $p$ vertices obtained by replacing the standard Kirchhoff conditions $\sum_j y'_j(\ell)=\sum_k y_k'(0)$ at a vertex $v_i$ with incoming edges $e_j$ and outgoing edges $e_k$ by the Robin-Kirchhoff conditions $\sum_j y'_j(\ell)+b_iy_{j_1}(\ell)=\sum_k y_k'(0)$ with some real coefficients $b_i$, $i=1,\ldots,p$.
For trees, we study asymptotics of the eigenvalues of the respective Sturm-Liouville problems  and show how to recover the constants $b_i$ in the Robin-Kirchhoff boundary conditions using quantum graph eigenvalues (that is, we are assuming that the potentials are identically zero).

\subsubsection*{Acknowledgements}
This material is based upon work supported by the US NSF grants DMS-2108983/2106157; the authors thank the US NSF, National Academy of Science and Office of Naval Research Global for the support  of the project  ``IMPRESS-U: Spectral and geometric methods for damped wave equations with applications to fiber lasers".
The hospitality of the Institute of Mathematics of the Polish Academy of Sciences where the work began is gratefully acknowledged. VP was partially supported by the Academy of Finland (project no. 358155) and  is grateful to the University of Vaasa for hospitality. OB and VP thank the Ministry of Education and Science of Ukraine for the support in completing the work on the project 'Inverse problems of finding the shape of a graph by spectral data', state registration number 0124U000818.  YL would
like to thank the Courant Institute of Mathematical Sciences at NYU
and especially Prof. Lai-Sang Young for their hospitality.
 
\section{The Robin problems and their characteristic functions}


Let $G$ be an equilateral compact connected simple graph with $p$ vertices $v\in\mathcal{V}$, $\text{card}(\mathcal{V})=p$, and $g$ edges $e_j\in\mathcal{E}$, $j=1,\ldots,g$; each of the edges is of length $\ell$.  The orientation of the edges is arbitrary. For each $v\in\mathcal{V}$ we denote by $d(v)$ its degree, by $d_{\textrm{in}}(v)$ the number of incoming and by $d_{\textrm{out}}(v)$ the number of outgoing edges so that $d_{\textrm{in}}(v)+d_{\textrm{out}}(v)=d(v)$. We enumerate the vertices $v_1,\ldots,v_p\in{\mathcal{V}}$ arbitrary. 

We consider  the Sturm-Liouville equations on the edges, 
\begin{equation}
\label{2.3}
-y_j''+q_j(x)y_j=\lambda y_j, \, x\in e_j=[0,\ell],\,  j=1,2,\ldots,g, 
\end{equation} 
where $q_j\in L_2(0,\ell)$ are real.
Given real numbers $b_1,\ldots, b_p$, we set up the  \textit{Robin spectral  problem} on the graph $G$ by imposing the generalized (standard) Robin (-Kirchhoff)  conditions at the vertices $v_i$, $i=1, \ldots, p$, as follows:
If $v_i$ is a pendant vertex  then
for the edge $e_j$ incoming to (respectively,  for the edge $e_k$ outgoing from) 
$v_i$ we set
\begin{equation}
\label{2.4}
y_j'(\ell)+b_iy_j(\ell)=0\,  \text{ (respectively, $ -y_k'(0)+b_iy_k(0)=0$)}. 
\end{equation}
At  each interior vertex $v_i$, $i=1,\ldots,p$, we impose the continuity conditions   
\begin{equation}
\label{2.5}
y_{j_1}(\ell)=\dots=y_{j_{d_{\textrm{in}}}}(\ell)=y_{k_1}(0)=\dots=y_{k_{d_{\textrm{out}}}}(0)
\end{equation}
for the incoming to $v_i$ edges $e_j$ and for the edges $e_k$ outgoing from $v_i$,  and the  Robin-Kirchhoff's conditions
\begin{equation}
\label{2.6}
\mathop{\sum}\nolimits_j y'_j(\ell)+b_iy_{j_1}(\ell)=\mathop{\sum}\nolimits_k y_k'(0)
\end{equation}
where the sum in the left-hand side is taken over all edges $e_j$ incoming to $v_i$, the sum in the right-hand side is taken over all edges $e_k$ outgoing from $v_i$, and $e_{j_1}$ is any one of the incoming edges. 
Clearly, conditions  (\ref{2.4})--(\ref{2.6}) with $b_i=0$ are the so called \textit{standard} conditions, cf.\ \cite{BK,Kuraso,MP2}. Also, if we allow $b_i=\infty$ then, formally, conditions (\ref{2.4})--(\ref{2.6}) correspond to the Dirichlet condition
\eqref{2.7} below.


Given the Robin problem \eqref{2.3}--\eqref{2.6} at all vertices $\{v_1,\ldots,v_p\}$ (where we allow some or even all constants $b_i$ to be equal to zero), we will set up the following auxiliary Dirichlet-standard spectral  problems on the graph $G$:  
For any given $r=1,\ldots, p$ and any integers $1\le i_1<i_2<\ldots<i_{r}\le p$ and the corresponding vertices $v_{i_1},\ldots,v_{i_{r}}$ we impose, at each of the vertices $v_{i_l}\in\{v_{i_1},\ldots v_{i_r}\}$, $l=1,\ldots,r$,
 the generalized Dirichlet conditions,
\begin{equation}
\label{2.7} 
y_j(\ell)=0\, \text{ (respectively, } y_k(0)=0) 
\end{equation}
for all edges $e_j$ incoming to (respectively, for all edges
$e_k$ outgoing from) the vertex $v_{i_l}$.  
 We impose the standard conditions  (with $b_i=0$) at all remaining vertices in the set $\{v_1,\ldots,v_p\}\setminus\{v_{i_1},\ldots,v_{i_{r}}\}$. We call the problem \textit{Dirichlet-standard auxiliary $\{i_1i_2...i_{r}\}$-problem}.

In order to define the characteristic functions of the Robin  problem and of the Dirichlet-standard auxiliary problems  
we look for real coefficients $\alpha_1, \beta_1, \ldots, \alpha_g, \beta_g$ such that the  solution of \eqref{2.3} can be expressed in the form 
\[
 y_j(x)=\alpha_js_j(\lambda,x)+\beta_jc_j(\lambda,x), \,   x\in e_j=[0,\ell], j=1,\ldots, g,
\]
 where $s_j(\lambda,x)$ is the solution of \eqref{2.3} which satisfies the conditions $s_j(\lambda,0)=s_j'(\lambda,0)-1=0$ and $c_j(\lambda,x)$ is the solution of \eqref{2.3} which satisfies $c_j(\lambda,0)-1=c_j'(\lambda,0)=0$.
Substituting this into the vertex conditions at each vertex, 
we obtain  systems of $2g$ linear algebraic equations with $2g$ unknowns $\alpha_1,\beta_1,\ldots, \alpha_g, \beta_g$. Indeed, each vertex $v$ generates $d(v)-1$ equations coming from the continuity conditions and one more equation coming from the Kirchhoff or the Dirichlet condition, and we recall the well known ``hand-shake'' relation $2g=\sum_{v\in\mathcal{V}}d(v)$.

We denote  by $\Phi(\lambda, b_1,...,b_p)$ the $2g\times 2g$-matrix of the system corresponding to the Robin problem and by  $\Phi_{i_1i_2...i_{r}}(\lambda)$ the matrix corresponding to a Dirichlet-standard auxiliary $\{i_1i_2... i_{r}\}$-problem, and
observe that the matrices involve the values $s_j(\lambda,\ell)$, $s'_j(\lambda,\ell)$, $c_j(\lambda,\ell)$, $c'_j(\lambda,\ell)$, $j=1,\ldots,g$, and $b_i$, $i=1,\ldots,p$. 

\begin{definition}\label{defphi}
We call
$
\lambda\mapsto \phi(\lambda, b_1,b_2,...,b_p):=\det(\Phi(\lambda, b_1,b_2,..., b_p))$
 the \emph{characteristic
function} of the Robin problem. For any  $r=1,\ldots,p$ and any integers
$1\le i_1<\ldots<i_{r}\le p$ we call
$
\lambda\mapsto \phi_{i_1i_2...i_{r}}(\lambda):=\det(\Phi_{i_1i_2...i_{r}}(\lambda))
$
 the \emph{characteristic
function} of the Dirichlet-standard auxiliary $\{i_1i_2...i_{r}\}$-problem.
\end{definition}

We are ready to present the first main result of the section where we express $\phi(\lambda, b_1,b_2,...,b_p)$, the Robin characteristic function, via $\phi_{i_1i_2...i_{r}}(\lambda)$, the Dirichlet-standard characteristic functions, exposing dependance on the Robin constants $b_i$'s explicitly.

\begin{theorem}\label{Theorem 2.2} Let $G$ be an equilateral compact connected simple graph. Then 
\begin{equation}
\label{2.10}
\begin{split}
&\phi(\lambda, b_1,b_2,..., b_p)=\phi(\lambda, 0,0, ..., 0)+
\sum\nolimits_{i=1}^p b_i\phi_{i}(\lambda)\\
&\quad+\sum\nolimits_{1\leq i_1<i_2\leq p}b_{i_1}b_{i_2}\phi_{i_1i_2}(\lambda)+
\ldots + \big(\prod\nolimits_{i=1}^p b_i\big) \phi_{12...p}(\lambda).
\end{split}\end{equation}
\end{theorem}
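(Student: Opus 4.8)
The plan is to exploit the fact that each Robin constant $b_i$ enters the system matrix $\Phi(\lambda,b_1,\ldots,b_p)$ in a highly localized and linear way, and then to read off \eqref{2.10} as the multilinear (subset) expansion of a determinant whose rows depend affinely on the $b_i$. The combinatorial shape of \eqref{2.10} — a sum over all subsets of $\{1,\ldots,p\}$, grouped by cardinality — is exactly what multilinearity of the determinant in finitely many rows produces, so the real work is to match the coefficients with the Dirichlet-standard characteristic functions.

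First I would isolate the dependence on the $b_i$. Inspecting the vertex conditions, the constant $b_i$ occurs only in the single equation attached to the vertex $v_i$: for a pendant vertex this is the Robin condition \eqref{2.4}, and for an interior vertex it is the Robin-Kirchhoff condition \eqref{2.6}; the continuity conditions \eqref{2.5} are free of every $b_i$. Moreover $b_i$ occurs there linearly, multiplying the boundary value $y_j(\ell)$ (pendant case) or $y_{j_1}(\ell)$ (interior case). Writing $y_j(x)=\alpha_j s_j(\lambda,x)+\beta_j c_j(\lambda,x)$ and substituting into the conditions, the Robin-Kirchhoff row of $\Phi$ associated with $v_i$ thus takes the form $R_i(\lambda)+b_i\rho_i(\lambda)$, where $R_i$ is the corresponding standard row (its value at $b_i=0$) and $\rho_i$ is the row representing the vertex-value functional, expressed in the coordinates $\alpha_1,\beta_1,\ldots,\alpha_g,\beta_g$. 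Crucially, these are $p$ pairwise distinct rows of $\Phi$, one per vertex, and each carries exactly one $b_i$.

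Next I would invoke multilinearity of the determinant in these $p$ rows. Since $\det$ is linear in each row separately and the perturbation $b_i\rho_i$ is confined to a single row, expanding row by row gives
\begin{equation*}
\phi(\lambda,b_1,\ldots,b_p)=\sum_{S\subseteq\{1,\ldots,p\}}\Big(\prod_{i\in S}b_i\Big)\det\Phi^{(S)}(\lambda),
\end{equation*}
where $\Phi^{(S)}$ is obtained from $\Phi(\lambda,0,\ldots,0)$ by replacing, for each $i\in S$, the standard row $R_i$ with the vertex-value row $\rho_i$ and leaving all other rows untouched. Grouping by $|S|=r$ already reproduces the structure of the right-hand side of \eqref{2.10} (the empty set giving $\phi(\lambda,0,\ldots,0)$, singletons giving $\sum_i b_i\phi_i$, and so on), so it remains only to identify the coefficient determinants: $\det\Phi^{(S)}(\lambda)=\phi_{i_1\ldots i_r}(\lambda)$ for $S=\{i_1,\ldots,i_r\}$.

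The last and most delicate step is this identification, carried out vertex by vertex. For $i\notin S$ the rows of $\Phi^{(S)}$ at $v_i$ are literally the standard-condition rows, matching those of the Dirichlet-standard matrix $\Phi_{i_1\ldots i_r}$. For $i\in S$ I would argue that the block carrying the conditions at $v_i$ in $\Phi^{(S)}$ — the $d(v_i)-1$ continuity rows from \eqref{2.5} together with the single vertex-value row $\rho_i$ — imposes exactly the generalized Dirichlet condition \eqref{2.7}: continuity forces all boundary values at $v_i$ to coincide, and $\rho_i$ sets that common value to zero, so $y_j(\ell)=0$ (resp.\ $y_k(0)=0$) for every edge at $v_i$. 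For a pendant vertex this is immediate, since $\rho_i$ is already the lone Dirichlet row and there are no continuity rows. For an interior vertex the continuity-plus-value block and the Dirichlet block span the same space of functionals and differ only by an invertible row transformation; the substance of the step is to verify that the determinant of that transformation is $+1$ under the chosen ordering and sign conventions for the continuity equations, so that $\det\Phi^{(S)}=\det\Phi_{i_1\ldots i_r}=\phi_{i_1\ldots i_r}$ with no spurious constant. I expect this sign/normalization bookkeeping at interior vertices to be the only genuine obstacle; once it is settled, substituting $\det\Phi^{(S)}=\phi_{i_1\ldots i_r}$ into the multilinear expansion and reorganizing by the cardinality of $S$ yields \eqref{2.10} exactly.
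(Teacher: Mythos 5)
Your proposal is correct and follows essentially the same route as the paper: the paper likewise writes the Robin--Kirchhoff row at $v_i$ as (standard row) $+\,b_i\cdot$(Dirichlet row) and expands the determinant by row-linearity, merely organizing the expansion as an induction over the vertices rather than as a single multilinear sum over subsets. The sign/normalization issue you flag at interior vertices does not actually arise, because the paper's auxiliary matrix $\Phi_{i_1\ldots i_r}(\lambda)$ is by construction the system matrix that keeps the $d(v_i)-1$ continuity rows and replaces only the Kirchhoff row by a single Dirichlet row for one chosen incident edge, so your $\Phi^{(S)}$ coincides with $\Phi_{i_1\ldots i_r}$ literally rather than merely up to an invertible row transformation.
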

\begin{proof}

The matrix $\Phi(\lambda, b_1, 0, 0, ..., 0)$ differs from $\Phi(\lambda, 0 , 0, 0, ..., 0)$ in the row corresponding to the Kirchhoff condition at the vertex $v_1$.
Indeed, let us denote by $e_{j_1},\ldots,e_{j_{d_\textrm{in}(v_1)}}$ the incoming to  and by $e_{k_1},\ldots,e_{k_{d_\textrm{out}(v_1)}}$ the outgoing from the vertex $v_1$ edges. For definiteness, we assume that each of $d_\textrm{in}(v_1)$ and $d_\textrm{out}(v_1)$ is at least one.
The Kirchhoff condition at $v_1$ for the Robin problem then gives
\begin{equation*}\begin{split}
&\sum\nolimits_{m=1}^{d_\textrm{in}(v_1)}\big(\alpha_{j_m}s'_{j_m}(\lambda,\ell)
+\beta_{j_m}c'_{j_m}(\lambda,\ell)\big)\\&\quad+b_1\big(\alpha_{j_1}s_{j_1}(\lambda,\ell)+
\beta_{j_1}c_{j_1}(\lambda,\ell)\big)-\sum\nolimits_{n=1}^{d_\textrm{out}(v_1)}\alpha_{k_n}=0.\end{split}\end{equation*}
On the other hand, setting $r=1$, $i_1=1$, the Dirichlet condition at $v_1$ for the Dirichlet-standard auxiliary $\{1\}$-problem gives $\alpha_{j_1}s_{j_1}(\lambda,\ell)+
\beta_{j_1}c_{j_1}(\lambda,\ell)=0$ (recall that for the latter problem the Dirichlet condition is imposed for all edges incident to $v_1$ and thus the continuity conditions at $v_1$ automatically holds).
 Then the row of the matrix $\Phi(\lambda, b_1,0,\ldots,0)$ corresponding to the Kirchhoff condition at $v_1$ is of the form 
\begin{equation*}\begin{split}
&\big\{\ldots 0,  s'_{j_1}(\lambda,\ell)+b_1s_{j_1}(\lambda,\ell), c'_{j_1}(\lambda,\ell)+b_1c_{j_1}(\lambda,\ell),0\ldots 0, \\
&\qquad\qquad s'_{j_2}(\lambda,\ell), c'_{j_2}(\lambda,\ell), 0 \ldots 0, s'_{j_{d_{\textrm{in}}(v_1)}}(\lambda,\ell),c'_{d(v_1)}(\lambda,\ell), \\
&\qquad\qquad\qquad\qquad 0\ldots 0, -1,0,\ldots, -1,0,\ldots  0 \ldots 0\big\}\\
&=\big\{\ldots 0,  s'_{j_1}(\lambda,\ell), c'_{j_1}(\lambda,\ell),0\ldots 0, \\
&\qquad\qquad s'_{j_2}(\lambda,\ell), c'_{j_2}(\lambda,\ell), 0 \ldots 0, s'_{j_{d_{\textrm{in}}(v_1)}}(\lambda,\ell),c'_{d(v_1)}(\lambda,\ell), \\
&\qquad\qquad\qquad\qquad 0\ldots 0, -1,0,\ldots, -1,0,\ldots  0 \ldots 0\big\}\\
&+
b_1\big\{\ldots 0, s_{j_1}(\lambda,\ell), c_{j_1}(\lambda,\ell),0,0,...,0\big\},
\end{split}\end{equation*}
where the curly brackets in the last line give the row of the matrix $\Phi_1(\lambda)$ for the Dirichlet-standard auxiliary $\{1\}$-problem that corresponds to the Dirichlet condition at $v_1$ for the edge $e_{j_1}$.
Thus,  
\begin{equation*}\begin{split}
\det\big(\Phi(\lambda, b_1,0,0...,0)\big)&=\det \big(\Phi(\lambda,0,0,...,0)\big)+b_1\det\big(\Phi_1(\lambda)\big), \text{  
that is,}\\
\phi(\lambda, b_1,0,...,0)&=\phi(\lambda, 0,0, ..., 0)+
b_1\phi_1(\lambda).\end{split}\end{equation*}
Repeating this procedure inductively we finally arrive at (\ref{2.10}). Indeed, the relation between the rows of $\Phi(\lambda,b_1,0,\ldots,0)$ and $\Phi_1(\lambda)$ can be written as $\Phi^{(v_1)}(\lambda,b_1,0,\ldots,0)=\Phi^{(v_1)}(\lambda,0,\ldots,0)+b_1\Phi^{(v_1)}_1(\lambda)$ where $\Phi^{(v_1)}$ is the row of the matrix $\Phi$ corresponding to the vertex $v_1$. A similar argument yileds
$\Phi^{(v_2)}(\lambda,b_1,b_2,0,\ldots,0)=\Phi^{(v_2)}(\lambda,b_1,0,\ldots,0)+b_2\Phi^{(v_2)}_2(\lambda, b_1)$ making the induction possible.
\end{proof}

A similar results holds if, from the very beginning, some of the (standard) Robin(-Kirchhoff) conditions in the Robin problem  
are replaced by the Dirichlet conditions. That is, for any $p'<p$  we fix vertices $\{v_1,\ldots,v_{p'}\}$ where the Robin conditions are imposed, and at all  other $p-p'$ vertices $v_{p'+1},\ldots, v_{p}$ we impose the Dirichlet conditions (in other words, we choose $b_1\ge0,\ldots, b_{p'}\ge0$ while $b_{p'+1}=\ldots=b_{p}=\infty$). We call this new boundary value problem the \textit{Robin-Dirichlet problem}. One can consider then the Dirichlet-standard auxiliary $\{i_1\ldots i_{r}\}$-problem as above but for the new Robin-Dirichlet problem instead of just the Robin problem.
\begin{corollary}\label{Corollary 2.3} Let us assume that $p'<p$ and  impose the Robin conditions at $p'$  vertices $\{v_1,\ldots,v_p'\}$ and the Dirichlet conditions at the remaining  $p-p'$ vertices $\{v_{p'+1},\ldots,v_p\}$. Then
\begin{equation}
\label{2.11}\begin{split}
&\phi^{(p')}(\lambda, b_1,b_2,..., b_{p'})=\phi^{(p')}(\lambda, 0,0, ..., 0)+\sum\nolimits_{i=1}^{p'} b_i\phi^{(p')}_{i}(\lambda)\\&\quad+\sum\nolimits_{1\leq i_1<i_2\leq p'}b_{i_1}b_{i_2}\phi^{(p')}_{i_1i_2}(\lambda)+
... + \big(\prod\nolimits_{i=1}^{p'} b_i\big) \phi^{(p')}_{i_1i_2\ldots i_{p'}}(\lambda),
\end{split}\end{equation}
where $\phi^{(p')}(\lambda, b_1,b_2,...,b_{p'})$ is the characteristic function of problem \eqref{2.3}-\eqref{2.7} with the Robin conditions at the vertices $v_1$, $v_2$,...,$v_{p'}$, the Dirichlet conditions at the remaining  $p-p'$ vertices  $v_{p'+1}$, $v_{p'+2}$,...,$v_{p}$, while $\phi^{(p')}_{i_1\dots i_r}(\lambda)$ for any $1\le r\le p'$ and any integers $1\le i_1<\ldots< i_r\le p'$ are the characteristic function of the auxiliary $\{i_1\ldots i_{r}\}$-problem. \end{corollary}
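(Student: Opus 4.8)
The plan is to run the same inductive row-splitting argument that proves Theorem~\ref{Theorem 2.2}, now carrying the fixed Dirichlet conditions at $v_{p'+1},\ldots,v_p$ along as an inert part of every matrix that appears. Concretely, I would write $\Phi^{(p')}(\lambda,b_1,\ldots,b_{p'})$ for the $2g\times 2g$ system matrix of the Robin-Dirichlet problem: its rows attached to $v_1,\ldots,v_{p'}$ encode the continuity and Robin-Kirchhoff conditions \eqref{2.5}--\eqref{2.6} with parameters $b_1,\ldots,b_{p'}$, while its rows attached to $v_{p'+1},\ldots,v_p$ encode the Dirichlet conditions \eqref{2.7}, so that $\phi^{(p')}(\lambda,b_1,\ldots,b_{p'})=\det\Phi^{(p')}(\lambda,b_1,\ldots,b_{p'})$.

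The key observation, already isolated in the proof of Theorem~\ref{Theorem 2.2}, is entirely local: switching on a single parameter $b_i$ at a vertex $v_i\in\{v_1,\ldots,v_{p'}\}$ alters exactly one row of the matrix, namely the Robin-Kirchhoff row at $v_i$, and that row decomposes additively as its $b_i=0$ (standard) version plus $b_i$ times the row implementing the Dirichlet condition at $v_i$. This decomposition involves only the edge data $s_j(\lambda,\ell),c_j(\lambda,\ell)$ at the edges incident to $v_i$ and is completely insensitive to what conditions---standard, Robin, or fixed Dirichlet---are imposed at the other vertices.

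I would therefore induct on $i=1,\ldots,p'$ exactly as before, applying multilinearity of the determinant in the single row that changes at each step. Turning on $b_1$ gives $\phi^{(p')}(\lambda,b_1,0,\ldots,0)=\phi^{(p')}(\lambda,0,\ldots,0)+b_1\phi^{(p')}_1(\lambda)$; turning on $b_2$ splits the $v_2$-row of $\Phi^{(p')}(\lambda,b_1,b_2,0,\ldots,0)$, and so forth. Since only the $p'$ rows attached to $v_1,\ldots,v_{p'}$ are ever split while the rows at $v_{p'+1},\ldots,v_p$ stay frozen as Dirichlet rows across all matrices in the argument, the resulting multilinear expansion ranges over subsets of $\{1,\ldots,p'\}$ alone and reproduces exactly \eqref{2.11}: each monomial $\prod_l b_{i_l}$ multiplies the determinant of the matrix whose rows at $v_{i_1},\ldots,v_{i_r}$ have been replaced by their Dirichlet versions, which is by definition $\phi^{(p')}_{i_1\ldots i_r}(\lambda)$.

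The one point that deserves a word of care---and the only real, if minor, obstacle---is bookkeeping of the auxiliary problems: one must verify that imposing Dirichlet conditions simultaneously on a subset $\{i_1,\ldots,i_r\}\subseteq\{1,\ldots,p'\}$ and on the fixed set $\{p'+1,\ldots,p\}$ still yields a consistent $2g\times 2g$ Dirichlet-standard system, so that $\phi^{(p')}_{i_1\ldots i_r}(\lambda)$ is well defined. This follows from the hand-shake count $2g=\sum_{v}d(v)$, because replacing a standard (or Robin) vertex by a Dirichlet one leaves the number of equations it contributes unchanged; hence no analysis beyond that of Theorem~\ref{Theorem 2.2} is needed.
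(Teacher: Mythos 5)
Your proposal is correct and coincides with the paper's intended argument: the corollary is proved by re-running the row-splitting induction of Theorem~\ref{Theorem 2.2} while the Dirichlet rows at $v_{p'+1},\ldots,v_p$ remain unchanged throughout, exactly as you describe.
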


\begin{example}\label{Example 2.4}
Formulas above can be used to construct characteristic functions for any graphs but they are especially simple for the star-graphs. As an example, we consider a star graph with three edges of length $\ell$ oriented toward the centre. We denote by $v_1,v_2,v_3$ the pendant and by $v_4$ the centre vertices and impose the Robin boundary condition $-y'_i(0)+b_iy_i(0)=0$, $i=1,2,3$, at the pendant vertices and the (standard) Kirchhoff boundary conditions $y_1(\ell)=y_2(\ell)$, $y_2(\ell)=y_3(\ell)$, $y'_1(\ell)+y_2'(\ell)+y'_3(\ell)=0$ at the centre $v_4$, that is, we assume that $b_4=0$.
The matrix $\Phi(\lambda, b_1,b_2,b_3,0)$ has the form
\begin{equation}\label{BigPhi}
\begin{bmatrix}
-1&b_1&0&0&0&0\\
\\
0&0&-1&b_2&0&0\\
\\
0&0&0&0&-1&b_3\\
s_1(\lambda,\ell)&c_1(\lambda,\ell)&-s_2(\lambda,\ell)&-c_2(\lambda,\ell)&0&0\\
\\
0&0&s_2(\lambda,\ell)&c_2(\lambda,\ell)&-s_3(\lambda,\ell)&-c_3(\lambda,\ell)\\
\\
s_1'(\lambda,\ell)&c_1'(\lambda,\ell)&s_2'(\lambda,\ell)&c_2'(\lambda,\ell)&s_3'(\lambda,\ell)&c_3'(\lambda,\ell)
\end{bmatrix}.
\end{equation}
To obtain the characteristic matrix $\Phi_1(\lambda)$ of the $\{1\}$-auxiliary problem from $\Phi(\lambda, b_1,b_2,b_3,0)$, we set in  \eqref{BigPhi} $b_1=b_2=b_3=0$ and replace $\{-1\,\,\, 0\}$ by $\{0\,\,\, 1\}$ in the first row  as this corresponds to imposing the Dirichlet condition at $v_1$ instead of the Neumann condition; similar changes needed to build $\Phi_i(\lambda)$ for $i=2,3$. To construct $\Phi_{12}(\lambda)$, the characteristic function of the $\{1,2\}$-auxiliary problem, we set in  \eqref{BigPhi} $b_1=b_2=b_3=0$ and replace $\{-1\,\,\, 0\}$ by $\{0\,\,\, 1\}$ in the first and in the second rows, and so on. Consecutive expansion of the respective determinants using their first rows gives \begin{align*}
\phi(\lambda,0,0,0,0)&=\det\begin{bmatrix}c_1(\lambda,\ell)&-c_2(\lambda,\ell)&0\\0&c_2(\lambda,\ell)&-c_3(\lambda,\ell)\\c_1'(\lambda,\ell)&c_2'(\lambda,\ell)&c_3'(\lambda,\ell)\end{bmatrix}\\&=\big(c_1(\lambda,x)c_2(\lambda,x)c_3(\lambda,x)\big)'\big|_{x=\ell}\\
\phi_1(\lambda)&=\big(s_1(\lambda,x)c_2(\lambda,x)c_3(\lambda,x)\big)'\big|_{x=\ell},\ldots, \\\phi_{12}(\lambda)&=\big(s_1(\lambda,x)s_2(\lambda,x)c_3(\lambda,x)\big)'\big|_{x=\ell},\ldots\\
\phi_{123}(\lambda)&=\big(s_1(\lambda,x)s_2(\lambda,x)s_3(\lambda,x)\big)'\big|_{x=\ell},
\end{align*} 
where prime stands for the $x$-derivative of the products of the respective functions.
The calculation leading to the formula reveals that passing from $\Phi$ to $\Phi_1$ requires replacing $c_1$ by $s_1$, passing from $\Phi$ to $\Phi_2$ requires replacing $c_2$ by $s_2$,  and so on.

Similarly, for a star graph with pendant vertices $v_1,\ldots,v_g$ and $g\ge1$ edges oriented toward the centre $v_{g+1}$ equipped with the standard conditions at the centre and Robin conditions at the pendant vertices we obtain the following formula,
\begin{equation*}\begin{split}
\phi&(\lambda, b_1,\ldots,b_g,0)=\big(c_1(\lambda,x)\cdot\ldots\cdot c_g(\lambda,x)\big)'\big|_{x=\ell}\\&+\sum\nolimits_{i=1}^gb_i\big(c_1(\lambda,x)\cdot\ldots\cdot s_i(\lambda,x)\cdot\ldots\cdot c_g(\lambda,x)\big)'\big|_{x=\ell}\\&\quad+
\sum\nolimits_{1\le i_1<i_2\le g}^gb_{i_1}b_{i_2}\big(c_1(\lambda,x)\cdot\ldots\cdot s_{i_1}(\lambda,x)\cdot\ldots\cdot  s_{i_2}(\lambda,x)\cdot\ldots\\&\qquad\cdot c_g(\lambda,x)\big)'\big|_{x=\ell}+\ldots+b_1b_2\ldots b_g\big(s_1(\lambda,x)\cdot\ldots\cdot s_g(\lambda,x)\big)'\big|_{x=\ell},
\end{split}
\end{equation*}
where each summand in the first sum contains exactly one function $s_i(\lambda, \cdot)$, in the second sum exactly two functions $s_{i_1}(\lambda,\cdot)$ and $s_{i_2}(\lambda,\cdot)$, and so on.
\hfill$\Diamond$\end{example}

Throughout,  we will use the following notation. We denote by
\begin{equation}
\label{2.1}
 D=\textrm{diag } \{d(v): v\in\mathcal{V}\}
\end{equation} 
 the diagonal $(p\times p)$ degree matrix; here $d(v)$ is the degree of a vertex $v$. 
  Let $A$ be the $(p\times p)$  adjacency matrix of $G$ whose entries $a_{vw}=1$ when $v,w\in\mathcal{V}$ are adjacent vertices and $a_{vw}=0$ when they are not. 
   Let $1\le r\le p$ and $1\le i_1<\ldots<i_r\le p$ be as in Definition \ref{defphi}. We denote by
$A_{i_1i_2...i_{r}}$, respectively, $D_{i_1i_2...i_{r}}$, the principal submatrix of $A$, respectively, $D$ obtained by deleting the rows and
 columns  corresponding to the vertices $v_{i_1},v_{i_2},\ldots,v_{i_{r}}$. 
We introduce notation
\begin{equation}
\label{2.2}\begin{split}
\psi(z)&:=\det(-zD+A), \\ \psi_{i_1i_2\ldots i_{r}}(z)&: =\det(-z D_{i_1i_2\ldots i_{r}}+A_{i_1i_2\ldots i_{r}}), \, \psi_{1\dots p}(z):=1.\end{split}
\end{equation}
The next result follows from  \cite[Theorem 6.4.2]{MP2}  (which, in turn, a generalization of \cite{Be}) adapted to the case of our Robin problem and to our Dirichlet problem.

\begin{theorem}\label{Theorem 2.4} Let $G$ be a connected simple graph  with $p\geq 2$ vertices.  Assume that all edges have the same length $\ell$ and the same  potential $q$ symmetric with respect to the midpoint of each edge so that $q(\ell-x)=q(x)$ for almost all $x\in[0,\ell]$.  Then the spectrum of problem (\ref{2.3})--(\ref{2.7})  coincides with the set of zeros of the function  $\phi(\lambda, b_1,b_2,...,b_p)$. Moreover, the characteristic functions $\phi$'s introduced in Definition \ref{defphi} are related to the determinants $\psi$'s introduced in \eqref{2.2} by the formulas
\begin{align}
\label{2.12} 
&\phi(\lambda, 0,0,...,0)=\left(s(\lambda,\ell)\right)^{g-p}\psi(c(\lambda, \ell)),\\
\label{2.13} 
&\phi_i(\lambda)=\left(s(\lambda, \ell)\right)^{g-p+1}\psi_i(c(\lambda, \ell)),
\, 1\le i\le p, \\
\label{2.14} 
&\phi_{i_1i_2}(\lambda)=\left(s(\lambda, \ell)\right)^{g-p+2}\psi_{i_1i_2}(c( \lambda, \ell)),\, 1\le i_1<i_2\le p,\ldots,\\
&\phi_{1\dots p}(\lambda)=s(\lambda,\ell)^g.\end{align} 
\end{theorem}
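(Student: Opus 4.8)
The plan is to treat the two assertions in turn. The spectral characterization is built into Definition~\ref{defphi}: on each edge the general solution of \eqref{2.3} is $y_j=\alpha_j s_j(\lambda,\cdot)+\beta_j c_j(\lambda,\cdot)$, so imposing the complete list of vertex conditions \eqref{2.4}--\eqref{2.7} is precisely the homogeneous linear system $\Phi(\lambda,b_1,\dots,b_p)\,(\alpha_1,\beta_1,\dots,\alpha_g,\beta_g)^{\top}=0$; a nonzero eigenfunction exists iff this system is singular, i.e.\ iff $\phi(\lambda,b_1,\dots,b_p)=0$. The content is in the factorization \eqref{2.12}--\eqref{2.14}, for which I would run the classical quantum-to-discrete reduction of von Below \cite{Be} and \cite[Theorem~6.4.2]{MP2}, adapted to carry the Robin constants.

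Two analytic inputs drive the reduction. The symmetry $q(\ell-x)=q(x)$ forces $s'(\lambda,\ell)=c(\lambda,\ell)$ (compare $u(x):=s(\lambda,\ell-x)$ against the basis $s,c$), and the Wronskian gives $c(\lambda,\ell)s'(\lambda,\ell)-c'(\lambda,\ell)s(\lambda,\ell)=1$. Working first where $s(\lambda,\ell)\neq0$, I introduce the vertex values $f(v)$ (well defined by continuity), solve $\beta_j=f(\mathrm{init})$ and $\alpha_j=\big(f(\mathrm{term})-f(\mathrm{init})\,c(\lambda,\ell)\big)/s(\lambda,\ell)$ on each edge, and use the two inputs to write the endpoint derivatives as $y_j'(\ell)=\big(f(v)c(\lambda,\ell)-f(u)\big)/s(\lambda,\ell)$ and $y_k'(0)=\big(f(w)-f(v)c(\lambda,\ell)\big)/s(\lambda,\ell)$. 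Substituting into \eqref{2.6} and \eqref{2.4} and clearing $s(\lambda,\ell)$, every Robin--Kirchhoff condition collapses to the single difference equation $\big(c(\lambda,\ell)D+s(\lambda,\ell)B-A\big)f=0$ with $B=\mathrm{diag}(b_1,\dots,b_p)$, which already reproves the spectral characterization.

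To get the factorization I would track $\det\Phi$ through this change of variables. Eliminating the continuity relations and solving for the $g$ coefficients $\alpha_j$ (one division by $s(\lambda,\ell)$ per edge) against the $p$-dimensional block of vertex values yields $\det\Phi(\lambda,b)=\pm\,s(\lambda,\ell)^{\,g-p}\det\big(c(\lambda,\ell)D+s(\lambda,\ell)B-A\big)$. Expanding the last determinant multilinearly in the diagonal matrix $s(\lambda,\ell)B$ gives $\sum_{S}s(\lambda,\ell)^{|S|}\big(\prod_{i\in S}b_i\big)\det\big(c(\lambda,\ell)D_{\hat S}-A_{\hat S}\big)$, and matching this term by term against the expansion of Theorem~\ref{Theorem 2.2} identifies $\phi_{i_1\dots i_r}=\pm\,s(\lambda,\ell)^{\,g-p+r}\det\big(c(\lambda,\ell)D_{\hat S}-A_{\hat S}\big)$, with $S=\{i_1,\dots,i_r\}$. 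The extreme case $r=p$ decouples every edge into a Dirichlet segment contributing one factor $s(\lambda,\ell)$, giving $\phi_{1\dots p}=s(\lambda,\ell)^{g}$ and $\psi_{1\dots p}=1$. Since both sides, once the negative powers of $s(\lambda,\ell)$ are cleared, are entire in $\lambda$ and agree off the discrete zero set of $s(\lambda,\ell)$, analytic continuation removes the standing assumption $s(\lambda,\ell)\neq0$.

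The step I expect to be the main obstacle is the honest bookkeeping of this determinant reduction: fixing the exact power of $s(\lambda,\ell)$ emitted per edge and the global sign of the Jacobian, and then reconciling the determinant $\det(c(\lambda,\ell)D_{\hat S}-A_{\hat S})$ thrown off by the reduction with $\psi_{i_1\dots i_r}(c(\lambda,\ell))=\det(-c(\lambda,\ell)D_{\hat S}+A_{\hat S})$, which differ by the factor $(-1)^{\,p-r}$ from pulling the sign out of a $(p-r)\times(p-r)$ determinant. Because Theorem~\ref{Theorem 2.2} already pins the relative signs among the $\phi_{i_1\dots i_r}$, all these sign factors must be tracked simultaneously and consistently. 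The safest route is to verify the entire count first on a single edge and on the star of Example~\ref{Example 2.4} (where $g-p$ may be negative, the deficit being absorbed by the factor of $s(\lambda,\ell)$ hidden in $\psi(c(\lambda,\ell))$ through $c(\lambda,\ell)^2-1=c'(\lambda,\ell)s(\lambda,\ell)$), and then to propagate it to a general graph via the handshake relation $2g=\sum_v d(v)$.
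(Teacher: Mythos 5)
The paper does not actually prove Theorem~\ref{Theorem 2.4}; it simply states that the result ``follows from [MP2, Theorem 6.4.2] (a generalization of [Be]) adapted to the Robin problem.'' Your proposal reconstructs precisely that adaptation: the symmetry identity $s'(\lambda,\ell)=c(\lambda,\ell)$ plus the Wronskian reduce the $2g\times2g$ system to the vertex-value system $\big(c(\lambda,\ell)D+s(\lambda,\ell)B-A\big)f=0$, the determinant reduction produces the factor $s(\lambda,\ell)^{g-p}$, and the multilinear expansion of $\det\big(c(\lambda,\ell)D+s(\lambda,\ell)B-A\big)$ in the diagonal block $s(\lambda,\ell)B$, matched coefficient-by-coefficient in the $b_i$ against Theorem~\ref{Theorem 2.2}, yields \eqref{2.13}--\eqref{2.14} simultaneously. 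This is the standard route and it is correct in outline; the residual sign discrepancy you flag between $\det\big(c(\lambda,\ell)D_{\hat S}-A_{\hat S}\big)$ and $\psi_{i_1\dots i_r}(c(\lambda,\ell))=\det\big(-c(\lambda,\ell)D_{\hat S}+A_{\hat S}\big)$ is genuinely only a matter of the row/column ordering convention in $\Phi$ (which already fixes $\det\Phi$ only up to sign), and your observation that for trees the negative power $s(\lambda,\ell)^{-1}$ is absorbed because $\psi(\pm1)=0$ and $c(\lambda,\ell)^2-1=c'(\lambda,\ell)s(\lambda,\ell)$ is exactly the point needed to make the analytic-continuation step legitimate. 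In short, you have supplied the proof the paper outsources to its citation.
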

In
 particular, in case of $q=0$ we have
\begin{align}
\label{2.15} 
&\phi(\lambda, 0,0,...,0)=\big({\sin( \sqrt{\lambda} \ell)}/{\sqrt{\lambda}}\big)^{g-p}\psi(\cos \sqrt{\lambda} \ell),\\
\label{2.16} 
&\phi_i(\lambda)=\big({\sin( \sqrt{\lambda} \ell)}/{\sqrt{\lambda}}\big)^{g-p+1}\psi_i(\cos \sqrt{\lambda} \ell), \, 1\le i\le p, \\
\label{2.17} 
&\phi_{i_1i_2}(\lambda)=\big({\sin( \sqrt{\lambda} \ell)}/{\sqrt{\lambda}}\big)^{g-p+2}\psi_{i_1i_2}(\cos \sqrt{\lambda}\ell),
1\le i_1<i_2\le p,\ldots,\\
&\phi_{1\dots p}(\lambda)=\big(\sin(\sqrt{\lambda}\ell) /\sqrt{\lambda}\big)^g.
\end{align} 

We denote by $z_1$, $z_2$ ..., $z_{p-1}$, $z_p$ the zeros of the polynomial $\psi(z)$ from \eqref{2.2}.  If $G$ is a tree then $g-p=-1$ and, as proven in \cite{FCh}, $z_1=-1$ and $z_p=1$ are simple roots; we denote by $m_l$ the multiplicity of the root $z_l$ of $\psi(z)$ for $l=2,\ldots,p-1$. In particular, for the tree $G$ we have 
\begin{align}
\label{2.18} 
&\phi(\lambda, 0,0,...,0)=-\sqrt{\lambda}\sin (\sqrt{\lambda} \ell)\widetilde{\psi}\big(\cos \sqrt{\lambda} \ell)\big),\\
\label{2.19} 
&\phi_i(\lambda)=\psi_i\big(\cos(\sqrt{\lambda} \ell)\big),\, 1\le i\le p,\\
\label{2.20} 
&\phi_{i_1i_2}(\lambda)=\big({\sin(\sqrt{\lambda} \ell)}/{\sqrt{\lambda}}\big)\psi_{i_1i_2}\big(\cos(\sqrt{\lambda} \ell)\big), 
1\le i_1<i_2\le p,\ldots,\\
&\phi_{1\dots p}(\lambda)=\big(\sin(\sqrt{\lambda}\ell) /\sqrt{\lambda}\big)^{p-1};\label{2.20new}\end{align}   
in \eqref{2.18}  and in what follows we use notation $\widetilde{\psi}(z)=(z^2-1)^{-1}\psi(z)$.

\begin{lemma}\label{Lemma 2.5} If $G$ is a tree with $p\ge2$ vertices 
 then $\psi_i(\pm 1)=(\mp 1)^{p-1}$ for any $i=1,\ldots,p$ and $\wtp(\pm 1)=(p-1)(\mp1)^{p-2}$.
 \end{lemma}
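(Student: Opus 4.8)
The plan is to read off both identities from the structure of the matrix $M(z):=-zD+A$ at the two distinguished points $z=\pm1$. First I would observe that $M(1)=A-D=-L$, where $L=D-A$ is the combinatorial graph Laplacian, while $M(-1)=D+A=Q$ is the signless Laplacian. Deleting the row and column indexed by $v_i$ then gives $\psi_i(1)=\det\big((-L)_i\big)=(-1)^{p-1}\det(L_i)$ and $\psi_i(-1)=\det(Q_i)$, where the subscript $i$ denotes the corresponding $(p-1)\times(p-1)$ principal submatrix. For the $\wtp$ part I would use that, for a tree, $z=\pm1$ are simple zeros of $\psi$ (as recalled in the excerpt), so from $\psi(z)=(z-1)(z+1)\wtp(z)$ one obtains the finite limits
\begin{equation*}
\wtp(1)=\tfrac12\,\psi'(1),\qquad \wtp(-1)=-\tfrac12\,\psi'(-1),
\end{equation*}
reducing everything to the two values $\psi'(\pm1)$.

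For the claim $\psi_i(\pm1)=(\mp1)^{p-1}$, the key input is the Matrix--Tree Theorem: for a connected graph every cofactor of $L$ equals the number $\tau(G)$ of spanning trees, so $\det(L_i)=\tau(G)$, and $\tau(G)=1$ because $G$ is itself a tree. This yields $\psi_i(1)=(-1)^{p-1}$ at once. For the signless Laplacian I would exploit that a tree is bipartite: letting $S=\operatorname{diag}(s_1,\dots,s_p)$ with $s_v=\pm1$ according to the bipartition class of $v$, one has $SAS=-A$ and $SDS=D$, hence $Q=SLS$ with $S=S^{-1}$. Since $S$ is diagonal, deleting row and column $i$ respects this conjugation, giving $Q_i=S_iL_iS_i$ and therefore $\det(Q_i)=\det(S_i)^2\det(L_i)=\det(L_i)=1$. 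Thus $\psi_i(-1)=1=(+1)^{p-1}$, matching the asserted lower sign.

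To compute $\psi'(\pm1)$ I would apply Jacobi's formula to $\psi(z)=\det M(z)$ with $M'(z)=-D$, obtaining $\psi'(z)=-\operatorname{tr}\big(\operatorname{adj}(M(z))\,D\big)$. The adjugate is where the spanning-tree count reappears: for the connected graph $G$ the matrix $\operatorname{adj}(L)$ has rank one and equals $\tau(G)\,J=J$, with $J$ the all-ones matrix. Using $\operatorname{adj}(-L)=(-1)^{p-1}\operatorname{adj}(L)$ together with $\operatorname{tr}(JD)=\sum_{v}d(v)=2(p-1)$ (the hand-shake relation for a tree), I get $\psi'(1)=-(-1)^{p-1}\,2(p-1)=(-1)^p\,2(p-1)$, whence $\wtp(1)=(-1)^p(p-1)=(p-1)(-1)^{p-2}$. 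For $z=-1$ the bipartite similarity gives $\operatorname{adj}(Q)=S\,\operatorname{adj}(L)\,S=SJS$, whose diagonal entries are $s_v^2=1$; hence $\operatorname{tr}(\operatorname{adj}(Q)D)=\sum_v d(v)=2(p-1)$, so $\psi'(-1)=-2(p-1)$ and $\wtp(-1)=(p-1)=(p-1)(+1)^{p-2}$, again matching.

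The only genuine obstacle is the bookkeeping of signs: one must track the scalar factor in $\operatorname{adj}(cM)=c^{p-1}\operatorname{adj}(M)$ and the parity identities $(-1)^{p-1}=(-1)^{p+1}$ and $(-1)^{p-2}=(-1)^p$ needed to rewrite the answers in the exact form stated in the lemma. A secondary point worth checking is the applicability of Jacobi's formula at $z=\pm1$, where $M(z)$ is singular; this causes no difficulty, since $\psi'(z)=-\operatorname{tr}\big(\operatorname{adj}(M(z))D\big)$ is a polynomial identity valid for all matrices, so it may simply be evaluated at the singular points.
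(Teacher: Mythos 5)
Your proof is correct, but it follows a genuinely different route from the paper's. The paper proves the lemma by induction on the tree: it roots $G$ at $v_i$, splits it into subtrees $G_k$, and runs the induction through the product formula for $\widehat{\psi}_{{}_G}$ and the additive decomposition of $\psi_{{}_G}/\widehat{\psi}_{{}_G}$ borrowed from \cite{BLP}, with base cases $p=2,3$. You instead argue globally: identifying $-D+A$ with $-L$ (Laplacian) and $D+A$ with the signless Laplacian $Q$, you get $\psi_i(1)=(-1)^{p-1}\det(L_i)=(-1)^{p-1}\tau(G)=(-1)^{p-1}$ from the Matrix--Tree theorem, and $\psi_i(-1)=\det(Q_i)=\det(L_i)=1$ from the bipartite gauge $Q=SLS$; for $\wtp(\pm1)$ you combine the factorization $\psi(z)=(z^2-1)\wtp(z)$ (legitimate since $\psi(\pm1)=0$, as recalled from \cite{FCh}) with Jacobi's formula and the rank-one identity $\operatorname{adj}(L)=\tau(G)J$, so that $\operatorname{tr}(JD)=2(p-1)$ delivers $\psi'(\pm1)$ and hence $\wtp(\pm1)$. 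I checked the sign bookkeeping ($\operatorname{adj}(-L)=(-1)^{p-1}\operatorname{adj}(L)$, $\operatorname{adj}(SLS)=SJS$ with unit diagonal) and the edge case $p=2$; everything matches the stated formulas, and your remark that Jacobi's formula is a polynomial identity valid at the singular points $z=\pm1$ disposes of the only delicate step. What each approach buys: yours is shorter and leans on classical linear-algebraic facts, avoiding induction entirely and making transparent why the answer depends only on $p$ (through $\tau(G)=1$ and $\sum_v d(v)=2(p-1)$); the paper's inductive argument is self-contained within its own toolkit of $\whp$/$\wtp$ recursions, which it reuses elsewhere, and does not invoke the Matrix--Tree theorem or adjugate identities.
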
 
 \begin{proof}
 We fix $i$ and re-enumerate the vertices of the tree $G$ so that $v_0:=v_i$ is the root, and by $v_k$, $k=1,\ldots d(v_0)$, we now denote the vertices adjacent to the root. We present the proof for $d(v_0)\ge2$, the case $d(v_0)=1$ is similar and easier. We will use notation explained in \cite{BLP}, that is, we represent $G$ as the union of subtrees $G_k$, $k=1,\ldots,d(v_0)$, having the common root $v_0$, we denote by $\widehat{G}_k$ the subtree obtained from $G_k$ by deleting $v_0$ and the edge connecting $v_0$ and $v_k$ so that $v_k$ is the root of $\widehat{G}_k$, and denote $\psi_{{}_G}(z)=\det(-zD_G+A_G)$, $\widehat{\psi}_{{}_G}(z)=\det(-z\widehat{D}_G+\widehat{A}_G)$, where $\widehat{D}_G$, $\widehat{A}_G$ are obtained from the degree and adjacency matrices $D_G:=D$, $A_G:=A$ by deleting the first row and the first column corresponding to the root $v_0$ of $G$. We let $\wtp_{{}_{G}}(z)=\psi_{{}_G}(z)/(z^2-1)$.
 
 Using this notation, we have to show two equalities: 
 \begin{equation}\label{eqlts}
 \widehat{\psi}_{{}_G}(\pm1)=(\mp1)^{p_{{}_G}-1} \text{ and } \wtp_{{}_G}(\pm1)=(p_{{}_G}-1)(\mp1)^{p_{{}_G}-2},\end{equation} where $p_{{}_G}:=p$ is the number of vertices in the tree $G$. 
We will use induction. The first equality clearly holds for $p_{{}_G}=2$ when $G$ is the segment $[0,\ell]$ of the real line as in this case $-z\widehat{D}_G+\widehat{A}_G=-z$, the $(1\times 1)$ matrix. The second equality also holds for $p=2$ and $p=3$ because by a direct computation of $\det(-zD_{{}_G}+A_{{}_G})$ we have $\psi_{{}_G}(z)=z^2-1$ if $G$ is a segment and $\psi_{{}_G}(z)=-2z(z^2-1)$ if $G$ is a tree with two edges.
 
 To justify the induction steps, we recall the relations
 \begin{align}\label{hathat}
 \widehat{\psi}_{{}_G}(z)&=\prod\nolimits_{k=1}^{d(v_0)}\whp_{{}_{G_k}}(z)=\prod\nolimits_{k=1}^{d(v_0)}\big(\psi_{{}_{\widehat{G}_k}}(z)-z\widehat{\psi}_{{}_{\widehat{G}_k}}(z)\big),\\\label{hathat2}
 \psi_{{}_G}(z)/\whp_{{}_G}(z)&=-zd(v_0)\\& \quad-
 \big(\whp_{{}_{\widehat{G}_1}}(z)/\whp_{{}_{G_1}}(z)\big)-\ldots-\big(\whp_{{}_{\widehat{G}_{d(v_0)}}}(z)/\whp_{{}_{G_{d(v_0)}}}(z)\big)\nonumber
 \end{align}
 proven in \cite{BLP}, see there Remark 2.1, eqn.(2.3) and eqn.(2.9), respectively.
 A reason why \eqref{hathat} and \eqref{hathat2} hold is that the matrix $-z\widehat{D}_G+\widehat{A}_G=\oplus_k\big(-z\widehat{D}_{G_k}+\widehat{A}_{G_k}\big)$ is block-diagonal which implies the product formula $\whp_{{}_G}(z)=\prod_{k=1}^{d(v_0)}\whp_{{}_{G_k}}(z)$ in \eqref{hathat} right away. 
 
 To begin the proof of the induction step for the first equality in \eqref{eqlts}, we note that $\psi_{{}{\widehat{G}_k}}(\pm 1)=0$ by \cite[Lemma 1.7(iv)]{FCh} since $\widehat{G}_k$ is a tree. This, \eqref{hathat}  and the induction assumption applied to $\widehat{\psi}_{{}_{\widehat{G}_k}}$ imply
 \begin{equation*}\begin{split}
 \widehat{\psi}_{{}_G}(\pm1)&=\prod\nolimits_{k=1}^{d(v_0)}\big(0+(\mp1)\cdot
 \widehat{\psi}_{{}_{\widehat{G}_k}}(\pm1)\big)\\&=\prod\nolimits_{k=1}^{d(v_0)}(\mp1)\cdot
 (\mp1)^{p_{{}_{\widehat{G}_k}}-1}=(\mp1)^{\sum\nolimits_{k=1}^{d(v_0)}p_{{}_{\widehat{G}_k}}}=(\mp1)^{p_{{}_G}-1},\end{split}
 \end{equation*}
 as required in the first equality in \eqref{eqlts}.
 
 To begin the proof of the induction step for the second equality in \eqref{eqlts}, we fix a natural $d\in[1,d(v_0)]$ and split $G=G'\cup G''$ where $G'=\cup_{k=1}^dG_k$ and $G''=\cup_{k=d+1}^{d(v_0)}G_k$. Writing $zd(v_0)=zd+z(d(v_0)-d)$ and using \eqref{hathat2} for $G'$ and $G''$ yields the decomposition $\psi_{{}_G}(z)/\whp_{{}_G}(z)=\psi_{{}_{G'}}(z)/\whp_{{}_{G'}}(z)+\psi_{{}_{G''}}(z)/\whp_{{}_{G''}}(z)$. In turn, multiplying this by $\whp_{{}_G}(z)$ and dividing by $(z^2-1)$,  the product formula in \eqref{hathat} yields $\wtp_{{}_G}(z)=\wtp_{{}_{G'}}(z)\cdot\whp_{{}_{G''}}(z)+\whp_{{}_{G'}}(z)\cdot\wtp_{{}_{G''}}(z)$. Using the last formula, the induction assumption for $G'$ and $G''$, and the first equality in \eqref{eqlts} yields the required assertion for $G$, 
 \begin{equation*}
 \begin{split}
 \wtp_{{}_G}(\pm1)&=(p_{{}_{G'}}-1)(\mp1)^{(p_{{}_{G'}}-2)}\cdot(\mp1)^{p_{{}_{G''}}-1}
 +(\mp1)^{p_{{}_{G'}}-1}\\&\cdot (p_{{}_{G''}}-1)(\mp1)^{(p_{{}_{G''}}-2)}=(p_{{}_G}-1)(\mp1)^{p_{{}_G}-2},
 \end{split}
 \end{equation*}
 because $p_{{}_{G'}}+p_{{}_{G''}}=p_{{}_G}+1$ as $v_0$ in $G'$ and $G''$ is counted twice.
 \end{proof}

 \section{Eigenvalue asymptotics}

In this section, $G$ is a tree with $g$ edges and $p=g+1$ vertices, and we consider the Robin problem  \eqref{2.3}--\eqref{2.6} with zero potential, $q_j(x)=0$, $x\in[0,\ell]$, $j=1,\ldots,g$. We recall that $z_l$ are zeros of the polynomial $\psi(z)=\det(-zD+A)$ from \eqref{2.2} with multiplicities $m_l$, $l=1,\ldots,p$, where $z_1=-1$, $z_p=1$ and $m_1=m_p=1$, and that $\wtp(z)=(z^2-1)^{-1}\psi(z)$.

\begin{theorem}\label{Theorem 3.1}  Let $G$ be an equilateral compact tree and assume that the potential is identically zero. 
The spectrum $\{\lambda\}$ of the Robin boundary value problem (\ref{2.3})--(\ref{2.6}) is the union of $2p-3$ sequences,
\begin{equation}\label{lambdaseq}
\big\{\lambda_k^{(1)}\big\}_{k=0}^\infty,\,
\big\{\lambda_k^{(l)}\big\}_{k=0}^\infty,\,
\big\{\lambda_k^{(-l)}\big\}_{k=1}^\infty,\, l=2,\ldots,p-1,
\end{equation}
with the following asymptotics,
\begin{align} 
\label{3.2}
\sqrt{\lambda_k^{(1)}}&=(\pi k)/\ell+o(1), \text{ as $k\to+\infty$},\\ 
\label{3.1}
\sqrt{\lambda_k^{(\pm l)}}&=\big(\pm\arccos z_{l}+2\pi k\big)/\ell+o(1) \text{  for $ l=2,..., p-1$}.
\end{align}
\end{theorem}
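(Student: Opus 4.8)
The plan is to realize the eigenvalues as the zeros of the characteristic function and to read off their asymptotics from a dominant-balance analysis. Writing $\mu=\sqrt{\lambda}$, $w=\cos(\mu\ell)$ and $s=\sin(\mu\ell)/\mu$, I would first substitute the tree formulas \eqref{2.18}--\eqref{2.20new} into the expansion \eqref{2.10} of Theorem~\ref{Theorem 2.2}. Since every summand of \eqref{2.10} beyond the first two carries a factor $s^{m}$ with $m\ge1$, while the first-order summand $\sum_i b_i\phi_i(\lambda)=\sum_i b_i\psi_i(w)$ carries neither $s$ nor $\mu$, this yields
\[
\phi(\lambda,b_1,\dots,b_p)=-\mu\sin(\mu\ell)\,\widetilde{\psi}(w)+\sum_{i=1}^p b_i\psi_i(w)+s\,R(w;b_1,\dots,b_p),
\]
where $R$ is a polynomial in $w$ (hence bounded as $\mu\to+\infty$) and in the $b_i$. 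The whole proof rests on the resulting scale separation: the principal part $\phi(\lambda,0,\dots,0)=-\mu\sin(\mu\ell)\widetilde{\psi}(w)$ grows like $\mu$ off its zeros, whereas the remainder $\sum_i b_i\psi_i(w)+sR$ is $O(1)$.

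I would next describe the zeros of the principal part, which split into two families. The factor $\sin(\mu\ell)$ vanishes at $\mu=\pi k/\ell$; there $w=(-1)^k=\pm1$ and, by Lemma~\ref{Lemma 2.5}, $\widetilde{\psi}(\pm1)=(p-1)(\mp1)^{p-2}\neq0$, so the principal part has a simple zero with derivative of order $\mu$. The factor $\widetilde{\psi}(w)$ vanishes at the interior roots $w=z_l\in(-1,1)$, $l=2,\dots,p-1$; these solve $\cos(\mu\ell)=z_l$, i.e. $\mu\ell=\pm\arccos z_l+2\pi k$, and since $\sin(\mu\ell)=\pm\sqrt{1-z_l^2}\neq0$ there, the principal part vanishes to the order $m_l$ equal to the multiplicity of $z_l$. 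These are exactly the $2p-3$ reference families named in \eqref{3.2}--\eqref{3.1}.

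The core step is a Rouch\'e argument in the $\mu$-plane on shrinking disks centred at the reference points. Near $\mu_k=\pi k/\ell$, on the circle $|\mu-\mu_k|=\rho_k:=\mu_k^{-1/2}$ one has $|\sin(\mu\ell)|\ge c\rho_k$, so the principal part has modulus at least $c\,\mu_k\rho_k=c\,\mu_k^{1/2}\to\infty$, which dominates the $O(1)$ remainder; as $\rho_k\to0$, Rouch\'e produces exactly one zero of $\phi(\lambda,b)$ within $o(1)$ of $\mu_k$, giving \eqref{3.2}. Near a reference point $\mu_0$ with $\cos(\mu_0\ell)=z_l$, on the circle $|\mu-\mu_0|=\rho:=\mu_0^{-1/(2m_l)}$ one has $|w-z_l|\ge c\rho$ and hence $|\widetilde{\psi}(w)|\ge c\rho^{m_l}$, so the principal part has modulus at least $c\,\mu_0\rho^{m_l}=c\,\mu_0^{1/2}\to\infty$; Rouch\'e then produces $m_l$ zeros clustering within $o(1)$ of $\mu_0$, giving \eqref{3.1}. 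Self-adjointness of the Robin problem for real $b_i$ makes these zeros real, so the $o(1)$ is real.

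Finally I would verify exhaustion by a counting/domination argument: off the union of the shrinking disks the principal part stays bounded below by $c\,\mu^{1/2}$ and therefore dominates the remainder, so $\phi(\lambda,b)$ has no further zeros; and the zeros found per period $2\pi/\ell$ number $2$ from the $\sin$-family and $2(p-2)$ from the $\cos$-families (the latter counted with the multiplicities $m_l$), i.e. $2g$ in all, matching the Weyl count for the quantum tree, so no eigenvalue is missed. The main obstacle is the local analysis at interior roots $z_l$ of multiplicity $m_l>1$: the principal part then has a zero of order $m_l$, linearization is unavailable, and one must calibrate the Rouch\'e radius as $\mu_0^{-1/(2m_l)}$ and check that all $m_l$ perturbed zeros stay within $o(1)$; proving the uniform-in-$k$ lower bounds $|\sin(\mu\ell)|\ge c\rho_k$ and $|\widetilde{\psi}(w)|\ge c\rho^{m_l}$ on the respective circles, together with the off-disk bound $|\phi(\lambda,0,\dots,0)|\ge c\,\mu^{1/2}$, is the technical heart of the estimate.
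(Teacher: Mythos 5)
Your decomposition of $\phi(\lambda,b_1,\dots,b_p)$ into the principal part $-\sqrt{\lambda}\sin(\sqrt{\lambda}\ell)\,\widetilde{\psi}(\cos(\sqrt{\lambda}\ell))$ plus a remainder that is $O(1)$ on the real axis, and your Rouch\'e argument on circles of radius $k^{-1/2}$ (respectively $k^{-1/(2m_l)}$ at a root $z_l$ of multiplicity $m_l$), is essentially the paper's proof of the asymptotics \eqref{3.2}--\eqref{3.1}; the lower bound $|f|\ge ck^{1/2}$ and the boundedness of the remainder are obtained there from Lemma \ref{Lemma 2.5} exactly as you indicate.

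The genuine gap is in your exhaustion step. First, the bound ``the remainder is $O(1)$'' holds only for real $\sqrt{\lambda}$, where $|\cos(\sqrt{\lambda}\ell)|\le 1$; so your off-disk domination controls only the large real zeros and says nothing about the finitely many eigenvalues with $|\sqrt{\lambda}|$ bounded, including possible negative eigenvalues (purely imaginary $\sqrt{\lambda}$), which a Robin perturbation with negative $b_i$ can certainly produce. Second, invoking ``the Weyl count'' cannot close this: Weyl asymptotics fixes the counting function only up to bounded errors, whereas the claim that the spectrum is \emph{exactly} the union of the $2p-3$ sequences requires the exact number of eigenvalues in $[-R^2,R^2]$ to equal the number of reference zeros of $f$ there, so that the low-lying eigenvalues fill precisely the initial segments of the sequences. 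The paper supplies this missing piece with a homotopy $t\mapsto\phi(\lambda,tb_1,\dots,tb_p)$, $t\in[0,1]$: since all eigenvalues stay real, the count in $[-R^2,R^2]$ can change only if an eigenvalue crosses an endpoint, and this is excluded by proving $|f(R)|>|g_t(R)|$ uniformly in $t$ for a well-chosen $R=(2\pi k+k^{-1/2})/\ell$. An alternative repair would be a global Rouch\'e comparison of $f$ and $f+g$ on large circles in the complex $\sqrt{\lambda}$-plane, but that forces you to redo the estimates for complex $\sqrt{\lambda}$, where both $f$ and $g$ grow exponentially; either way, something beyond the real-axis estimates you wrote down is required.
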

\begin{proof} It is convenient to view $\sql$ as a new complex variable and introduce the functions 
\begin{equation}\label{deffg}\begin{split}
f(\sql)&=-\sql\sin(\sql\ell)\wtp\big(\cos(\sql\ell)\big) \, \text{ and } \\ 
g(\sql)&=
\sum\nolimits_{i=1}^p b_i\phi_{i}(\lambda)+\sum\nolimits_{1\leq i_1<i_2\leq p}b_{i_1}b_{i_2}\phi_{i_1i_2}(\lambda)+
\ldots \\ &\hskip3cm 
+ b_1\cdot\ldots\cdot b_p \phi_{12...p}(\lambda)\end{split}
\end{equation}
so that $f(\sql)=\phi(\lambda,0,\ldots,0)$ and $f(\sql)+g(\sql)=\phi(\lambda,b_1,\ldots,b_p)$, cf.\ \eqref{2.11}. In particular, the eigenvalues $\lambda$'s of the Robin problem are the squares of the zeros $\sql$'s of $f+g$. The functions $f$ and $g$ are even entire functions. The set of zeros of the function $f$ is the union of the following $4p-6$ sequences,
\begin{equation}\label{zerosf}\begin{split}
&\big\{\pm\pi k/\ell\big\}_{k=0}^\infty, \,
\big\{\pm(\arccos z_l+2\pi k)/\ell\big\}_{k=0}^\infty, \\
&\big\{\pm(-\arccos z_l+2\pi k)/\ell\big\}_{k=1}^\infty, \, l=2,\ldots,p-1,\end{split}
\end{equation}
where in the last sequences as well as in the last sequences in \eqref{lambdaseq} numeration starts with $k=1$, and $0$ is a double zero of $f$ since both $\sql$ and $\sin(\sql\ell)$ in the definition of $f$ are zeros at $\sql=0$.

To establish the asymptotic relations \eqref{3.2}--\eqref{3.1}, we will now show that for each $k$ large enough a small circle $\gamma_k$ centered at the $k$-th element of each of the sequences  \eqref{zerosf} contains exactly one zero of the function $\sql\mapsto f(\sql)+g(\sql)=\phi(\lambda,b_1,\ldots,b_p)$. To do that we will show that $|f(\sql)|>|g(\sql)|$ for all $\sql\in\gamma_k$ and apply Rouche's theorem.

To begin, we consider the first sequence in \eqref{zerosf} with the $+$-sign and let $\gamma_k$ denote the circle centered at $\pi k/\ell$ of radius $r=k^{-1/2}$ so that if $\sql\in\gamma_k$ then $\sql\ell=\pi k+r\ell e^{i\theta}$ for some $\theta\in[0,2\pi)$. Then $\cos(\sql\ell)=(-1)^k+o(r)$ and $|\sin(\sql\ell)|=cr+o(r)$ as $r\to0$; here and in what follows $c$ stands for a positive constant independent on parameters in equations that could change from one estimate to another. We recall that $\wtp(\pm1)\neq0$ because $\pm1$ are simple roots of $\psi$ and that $|\psi_i(\pm1)|=1$ by Lemma \ref{Lemma 2.5}. It follows that if $\sql\in\gamma_k$ then
\begin{equation}\label{fbel}
|f(\sql)|=|\sql|\cdot|\sin(\sql\ell)|\cdot|\wtp(\cos(\sql\ell))|\ge ck\cdot ck^{-1/2}\cdot c=ck^{1/2}
\end{equation}
and that, using \eqref{2.19}--\eqref{2.20new},
\begin{equation}\label{gabov1}\begin{split}
|\phi_i(\lambda)|&=|\psi_i(\cos(\sql\ell))|=|\psi_i((-1)^k+o(r))|\le c,\\
|\phi_{i_1i_2}(\lambda)|&\le ck^{-1}\cdot|\psi_{i_1i_2}(\cos(\sql\ell))|\le ck^{-1},\dots, 
|\phi_{1\ldots p}(\lambda)|\le ck^{-1}.
\end{split}
\end{equation}
Estimates \eqref{gabov1} yield $\big|\sum b_i\phi_i(\lambda)+\sum b_{i_1}b_{i_2}\phi_{i_1i_2}(\lambda)+\ldots+b_{1}\dots b_{p}\phi_{1\dots p}(\lambda)\big|\le c$
and so the inequality $|f(\sql)|\ge ck^{1/2}>c\ge|g(\sql)|$ warrants the desired application of the Rouche theorem. 

To continue, we consider the second sequence in \eqref{zerosf} with the plus-sign and a fixed $l=2,\ldots,p-1$ and let $\gamma_k$ denote the circle centered at $(\arccos z_l+2\pi k)/\ell$ of radius $r=k^{-1/(2m_l)}$ so that if $\sql\in\gamma_k$ then $\sql\ell=\arccos z_l+2\pi k+r\ell e^{i\theta}$ for some $\theta\in[0,2\pi)$. Then $|\sql|\ge ck$, $|\sin(\sql\ell)|=|\sin(\arccos z_l+\ell re^{i\theta})|\ge c$ because $z_l\neq\pm1$, and $\cos(\sql\ell)=z_l+cre^{i\theta}+o(r)$ yielding $|(\cos(\sql\ell)-z_l)^{m_l}|\ge cr^{m_l}$. Since $m_l$ is the multiplicity of the zero $z_l$ of $\psi(z)$, we have
$\wtp(z)=(z-z_l)^{m_l}\wtp^{(l)}(z)$ where $\wtp^{(l)}$ is a polynomial such that $\wtp^{(l)}(z_l)\neq0$. Collecting all this together yields $|f(\sql)|\ge ck^{1/2}$ for $\sql\in\gamma_k$ as in \eqref{fbel}. The estimates as in \eqref{gabov1} are proved analogously, and thus Rouche theorem can be applied as above. All other cases in \eqref{zerosf} are similar.

In the course of proof of  \eqref{3.2},\eqref{3.1} we also proved that the zeros of the characteristic functions $\phi(\lambda,b_1,\ldots,b_p)$ and $\phi(\lambda,0,\dots,0)$ are in one-to-one correspondence provided they are large enough (that is, provided their numbers $k$ are large enough). The zeros of the respective characteristic functions are the eigenvalues of the eigenvalues problems with the Robin and standard conditions respectively. To finish the proof of the assertion in the theorem saying that the union of the sequences in \eqref{lambdaseq} gives the entire spectrum of the Robin problem, we need to show that for a sufficiently large $R$ the eigenvalues of the two problems located in the segment $[-R^2,R^2]$ are in one-to-one correspondence. To this end, we introduce yet another parameter, $t\in[0,1]$, and consider the characteristic function $\phi(\lambda, tb_1,\ldots,tb_p)$, a homotopy between $\phi(\lambda,b_1,\ldots,b_p)$ and $\phi(\lambda,0,\dots,0)$. When $t$ changes from $t=1$ to $t=0$ the Robin eigenvalues in the segment $[-R^2,R^2]$ move and eventually become the eigenvalues of the problem with the standard boundary conditions. Since the eigenvalues are real, the only possible ``loss'' of eigenvalues occurs when they ``leak'' through the end points of the segment. We may choose $R$ such that $\lambda=R^2$ is not a ``standard'' eigenvalue, equivalently, such that $f(R)\neq0$; here $f:\sql\mapsto\phi(\lambda,0,\ldots,0)$ as before.
Thus, to show that the number of the Robin eigenvalues in $[-R^2,R^2]$ is equal to the number of the ``standard'' eigenvalues in $[-R^2,R^2]$ we need to show the following assertion:  For all $t\in[0,1]$ the point $\sql=R$ is not a zero of the function $f+g_t:\sql\mapsto\phi(\lambda,tb_1,\ldots,tb_p)$ where $g_t(\sql)$ is defined as $g(\sql)$ in \eqref{deffg} except the Robin constants $b_i$ are replaced by $tb_i$, $i=1,\ldots,p$. In turn, the required assertion holds provided we are able to choose a large enough $R$ so that $|f(R)|>|g_t(R)|$ uniformly for all $t\in[0,1]$. This task is similar (but simpler) to what we have just accomplished justifying applicability of the Rouche theorem.

To proceed, we choose $R=(2\pi k+k^{-1/2})/\ell$ for $k$ large enough. As in \eqref{fbel}, \eqref{gabov1} we will use that $|\psi_i(1)|=1$ and $\wtp(1)\neq0$ by Lemma \ref{Lemma 2.5}. Thus, for $\sql=R\ge ck$ we write
\begin{align*}
\cos(R\ell)&=\cos(2\pi k+k^{-1/2})\ge c, \\ &|\wtp(\cos(R\ell))|\ge c,\,
|\sin(R\ell)|=|\sin(k^{-1/2})|\ge ck^{-1/2},\\
|\phi_i(\lambda)|&=|\psi_i(\cos(R\ell))|=|\psi_i(1+O(k^{-1/2}))|\le c,\\ &
|\phi_{i_1i_2}(\lambda)|\le c/k, \ldots, |\phi_{1\dots p}(\lambda)|\le c/k.
\end{align*}
This leads to the required inequality $|f(R)|\ge ck^{1/2}>c\ge|g(R)|$.
\end{proof}

We now provide a more refined asymptotics for $\{\lambda_k^{(1)}\}_{k=0}^\infty$ from \eqref{lambdaseq}.
\begin{theorem}\label{Theorem 3.2}
Let $G$ be an equilateral tree with identically zero potential. Then the asymptotics 
in  (\ref{3.2})  of the  sequence $\{\lambda_k^{(1)}\}_{k=0}^\infty$ of the eigenvalues of the Robin problem \eqref{2.3}--\eqref{2.6} can be refined as follows,
\begin{equation}
\label{3.11}
\sqrt{\lambda_k^{(1)}}=(\pi k)/\ell-\big((\pi(p-1))^{-1}\sum\nolimits_{i=1}^pb_i\big)(1/k)+o({1}/{k})
\text{ as $ k\to +\infty$.}
\end{equation}
\end{theorem}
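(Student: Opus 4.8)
The plan is to realize the eigenvalue $\sqrt{\lambda_k^{(1)}}$ as the unique zero $z_k$ of $f+g$ inside the circle $\gamma_k$ of radius $k^{-1/2}$ centered at $\pi k/\ell$ that was produced by the Rouch\'e argument in the proof of Theorem \ref{Theorem 3.1}. That argument supplies the crucial a priori bound $\eta_k:=z_k\ell-\pi k=O(k^{-1/2})$, an upgrade over the bare $\eta_k=o(1)$ coming from \eqref{3.2}; it is precisely this bound that lets me expand the characteristic equation one order further and extract the $1/k$-correction.

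First I would substitute $z_k\ell=\pi k+\eta_k$ into $f(\sql)+g(\sql)=0$ and Taylor-expand each piece, using $\sin(\pi k+\eta_k)=(-1)^k\sin\eta_k=(-1)^k\eta_k\bigl(1+O(\eta_k^2)\bigr)$ and $\cos(\pi k+\eta_k)=(-1)^k\cos\eta_k=(-1)^k\bigl(1+O(\eta_k^2)\bigr)$, together with the smoothness of the polynomials $\wtp$, $\psi_i$, $\psi_{i_1i_2}$, and so on. The constant terms are evaluated through Lemma \ref{Lemma 2.5}, namely $\psi_i(\pm1)=(\mp1)^{p-1}$ and $\wtp(\pm1)=(p-1)(\mp1)^{p-2}$; in particular $\wtp((-1)^k)\neq0$, which uses $p\ge2$ (that $\pm1$ are simple roots of $\psi$) and is exactly what keeps the coefficient of $\eta_k$ from degenerating.

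Next I would identify the leading balance. With $\eta_k=O(k^{-1/2})$ one gets, from $f(\sql)=-\sql\sin(\sql\ell)\wtp(\cos(\sql\ell))$,
\begin{equation*}
f(z_k)=-\tfrac{\pi k}{\ell}\,(-1)^k\,\wtp\bigl((-1)^k\bigr)\,\eta_k\bigl(1+O(1/k)\bigr),
\end{equation*}
which is of size $O(1)$ exactly when $\eta_k\sim 1/k$. In $g$, the leading sum satisfies $\sum_{i=1}^p b_i\,\psi_i\bigl((-1)^k\cos\eta_k\bigr)=\bigl(\sum_{i=1}^p b_i\bigr)\psi_i((-1)^k)+O(1/k)$ (all $\psi_i$ share the common value $\psi_i((-1)^k)$), while every higher sum carries at least one factor $\sin(z_k\ell)/z_k=O(k^{-3/2})$ and is therefore negligible. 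Equating $f+g=0$ and solving for $\eta_k$ gives $\eta_k=\frac{\ell}{\pi k}\cdot\frac{\psi_i((-1)^k)}{(-1)^k\wtp((-1)^k)}\sum_{i=1}^p b_i+O(1/k^2)$. A short case check over the two parities of $k$ shows the sign factor $\frac{\psi_i((-1)^k)}{(-1)^k\wtp((-1)^k)}$ equals $-\frac{1}{p-1}$ in both cases. Since $\sqrt{\lambda_k^{(1)}}=z_k=(\pi k+\eta_k)/\ell$, dividing by $\ell$ yields $\sqrt{\lambda_k^{(1)}}=\pi k/\ell-\frac{1}{\pi(p-1)}\bigl(\sum_{i=1}^p b_i\bigr)\frac1k+O(1/k^2)$, which is \eqref{3.11} with an error even sharper than the claimed $o(1/k)$.

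The main obstacle is nothing conceptual but the careful bookkeeping of remainders, so that everything beyond the stated $1/k$-term is genuinely $o(1/k)$. This rests entirely on the bound $\eta_k=O(k^{-1/2})$: once it is in hand, the quadratic remainders from $\sin$, $\cos$, and from $\wtp$, $\psi_i$, $\psi_{i_1i_2}$, as well as the $\sin(\sql\ell)/\sql$-weighted tail of $g$, are all $O(1/k)$ or smaller \emph{after} division by the $O(k)$ coefficient of $\eta_k$. Tracking the sign factor $-\frac{1}{p-1}$ consistently through both parities of $k$ via Lemma \ref{Lemma 2.5} is routine but must be carried out explicitly.
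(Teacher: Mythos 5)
Your proposal is correct and follows essentially the same route as the paper: expand the characteristic equation $f(\sql)+g(\sql)=0$ around $\sql\ell=\pi k$, evaluate the constants via Lemma \ref{Lemma 2.5}, observe that all sums in $g$ beyond $\sum_i b_i\phi_i$ are negligible because they carry factors $\sin(\sql\ell)/\sql$, and balance the two surviving terms to extract the $1/k$-coefficient (your sign bookkeeping through both parities of $k$ agrees with the paper's $(-1)^{(k+1)(p\pm1)}$ computation). The only difference is that you justify the order of the correction via the $O(k^{-1/2})$ localization inherited from the Rouch\'e circles of Theorem \ref{Theorem 3.1}, whereas the paper simply posits the ansatz $\sql=\pi k/\ell+\alpha k^{-1}+o(k^{-1})$ and solves for $\alpha$; this is a minor refinement, not a different method.
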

\begin{proof} We will be looking for a constant $\alpha$ such that the expression 
\begin{equation}\label{dfalp}
\sql=\pi k/\ell+\alpha\ki+o(\ki) \text{ as $k\to\infty$}\end{equation} satisfies the equation $\phi(\lambda,b_1,\ldots,b_p)=0$. One can re-write conclusions of Lemma \ref{Lemma 2.5} as $\wtp((-1)^k)=(p-1)(-1)^{(k+1)p}$ and $\psi_i((-1)^k)=(-1)^{(k+1)(p-1)}$. This and \eqref{2.19}--\eqref{2.20new} yield, 
\begin{align}\nonumber
\sin(\sql\ell)&=\sin\big(\pi k+(\alpha\ki+o(\ki))\big)\\
&\qquad=(-1)^k(\alpha\ki+o(\ki))+o(\ki),\label{39}\\\nonumber
\cos(\sql\ell)&=\cos\big(\pi k+\alpha\ki+o(\ki)\big)=(-1)^k+o(\ki),\\\label{311}
\wtp(\cos(\sql\ell))&=\wtp((-1)^k+o(\ki))=(p-1)(-1)^{(k+1)p}+o(\ki),\\\label{312}
\phi_i(\lambda)&=\psi_i(\cos(\sql\ell))=\psi_i((-1)^k+o(\ki))\nonumber\\&=(-1)^{(k+1)(p-1)}+o(\ki),\\\label{313}
\phi_{i_1i_2}(\lambda)&=O(\ki), \ldots, \phi_{1\ldots p}(\lambda)=O(\ki).
\end{align}
Plugging \eqref{dfalp}, \eqref{39}, \eqref{311} into $f(\sql)=-\sql\sin(\sql\ell)\wtp\big(\cos(\sql\ell)\big)$ gives
\[f(\sql)=(-1)^{(k+1)(p+1)}\alpha\pi(p-1)+o(1)+o(\ki)\] and by \eqref{312},\eqref{313} the equation
$\phi(\lambda,b_1,\ldots,b_p)=f(\sql)+\sum_{i=1}^pb_i\phi_i(\lambda)+O(\ki)=0$ reads
\[(-1)^{(k+1)(p+1)}\alpha\pi(p-1)+o(1)+o(\ki)+O(\ki)+(-1)^{(k+1)(p-1)}\sum\nolimits_{i=1}^pb_i=0.\] Solving for $\alpha$ and using \eqref{dfalp} implies \eqref{3.11}.
\end{proof}

\section{An Inverse Problem} 
In this section, $G$ is again a tree with $g$ edges and $p=g+1$ vertices, and we consider the Robin problem  \eqref{2.3}--\eqref{2.6} with zero potential, $q_j(x)=0$, $x\in[0,\ell]$, $j=1,\ldots,g$. We assume that the shape of the graph is known and, in particular, the matrices $D$ and $A$ and thus the determinants  $\psi(z)$, $\psi_i(z)$, $\psi_{i_1i_2}(z)$, and so on, from \eqref{2.2}, are given. Consequently, we may determine the functions $\phi(\lambda,0,\ldots,0)$, $\phi_i(\lambda)$, $\phi_{i_1i_2}(\lambda)$, and so on, using equations \eqref{2.18}--\eqref{2.20new}. We will consider the inverse problem of finding the coefficients $b_i$, $i=1,\ldots,p$, in the Robin conditions at the vertices provided we are given some spectral data, specifically, provided we are given distinct eigenvalues $\lambda_m$ of the Robin problem (so that $\phi(\lambda_m,b_1,\ldots,b_p)=0$ for  $m=1,\ldots,2^p-1$) specified in Theorem \ref{Theorem 4.2}.

Our plan is to use the given distinct eigenvalues $\lambda_m$, $m=1,\ldots,2^p-1$, and known functions $\phi(\cdot,0\ldots,0)$, $\phi_i(\cdot)$, $\phi_{i_1i_2}(\cdot)$, etc.,  to  utilize formula \eqref{2.10} in Theorem \ref{Theorem 2.2} to form a non-homogeneous  system of $2^p-1$ linear algebraic equations with $2^p-1$ unknowns $b_i$, $(b_{i_1}\cdot b_{i_2})$, \ldots, $(b_{1}\cdot\ldots \cdot b_{p})$, $1\le i\le p$, $1\le i_1<i_2\le p$, and so on. The system of equations reads,
\begin{equation}\label{4.1}
\begin{split}
&\sum\nolimits_{i=1}^p b_i\phi_{i}(\lambda_m)+\sum\nolimits_{1\leq i_1<i_2\leq p}(b_{i_1}\cdot b_{i_2})\phi_{i_1i_2}(\lambda_m)+\ldots\\ &\quad+
( b_1\cdot\ldots\cdot b_p) \phi_{1\ldots p}(\lambda_m)=-\phi(\lambda_m, 0,0, ..., 0), \,
m=1,\ldots,2^p-1.
\end{split}\end{equation}
Given $\lambda_1,\ldots,\lambda_{2^p-2}$ and any $\lambda$, we introduce notation
$\varphi(\lambda_1,\ldots\lambda_{2^p-2},\lambda)$ for the following determinant, 
\begin{equation}\label{eq4.2}\begin{split}
\det\begin{bmatrix}\phi_1(\lambda_1)&\dots
& \phi_{12}(\lambda_1) &\phi_{13}(\lambda_1)&\dots&\phi_{1\ldots p}(\lambda_1)\\
\phi_1(\lambda_2)&\dots
& \phi_{12}(\lambda_2) &\phi_{13}(\lambda_2)&\dots&\phi_{1\ldots p}(\lambda_2)\\
\vdots&\vdots
&\vdots&\vdots&\vdots&\vdots\\
\phi_1(\lambda_{2^p-2})&\dots
& \phi_{12}(\lambda_{2^p-2}) &\phi_{13}(\lambda_{2^p-2})&\dots&\phi_{1\ldots p}(\lambda_{2^p-2})\\
\phi_1(\lambda)&\dots
& \phi_{12}(\lambda) &\phi_{13}(\lambda)&\dots&\phi_{1\ldots p}(\lambda)
\end{bmatrix},\end{split}
\end{equation}
so that $\varphi(\lambda_1,\ldots\lambda_{2^p-2},\lambda_{2^p-1})$ is the determinant of system \eqref{4.1}.

We will need the notion of \textit{sine type function} taken from  \cite{LO} and several facts about the sine type functions, see \cite[Chapter 11]{MP0}.
\begin{definition}\label{Definition 4.1}  
An entire function $\omega:\sqrt{\lambda}\mapsto \omega(\sql)$ of exponential type $\sigma>0$ is said to be of \textit{sine type} provided the following holds:\,
(1) all zeros of $\omega$ lie in a horizontal strip $|\im\sqrt{\lambda}|<h$,
(2) for a certain fixed value $\im\sqrt{\lambda}=h_1$ and positive constants $M_1,M_2$ one has
$M_1\leq |\omega(\re\sqrt{\lambda}+ih_1)|\leq M_2$ for all $\re\sql\in(-\infty,\infty)$,
and (3) the types of $\omega$ in the upper and in the lower half-planes are equal.
\end{definition}
A  product of sine type functions is again a sine type function.
 The functions $\sql\mapsto\sin\sql\ell$, $\cos\sql\ell$ and (nonconstant) polynomials of $\cos\sql\ell$ with real coefficients, see Lemma \ref{Lemma 4.2} below, are of sine type while the functions $\sql\mapsto\sql\sin\sql\ell$ and $\sin(\sql\ell)/\sql$ are not of sine type due to the factors $\sql$ and $1/\sql$; the function $\sql\mapsto \textrm{const}$ is not a sine type either.
Any sine-type function can be presented in the form 
\begin{equation}
\label{4.2}
\omega(\sqrt{\lambda})=c \lim\nolimits_{n\to +\infty}\prod\nolimits_{k=-n}^n\big(1-{\sqrt{\lambda}}/{\sqrt{\lambda_k}}\big)
\end{equation} 
where $c\neq0$ is a constant and $\sqrt{\lambda}_k$ are zeros of $\omega$; here, if $\sqrt{\lambda}_k=0$ then the corresponding factor $\big(1-\sqrt{\lambda}/{\sqrt{\lambda}_k}\big)$ should be replaced by $\sqrt{\lambda}$.
We recall the following equivalent reformulation of Definition \ref{Definition 4.1} that goes back to \cite{LO}, see also \cite[Proposition 11.2.19]{MP0}.
\begin{lemma}\label{Definition 4.2}
An entire function $\omega:\sqrt{\lambda}\mapsto\omega(\sql)$ of exponential type $\sigma>0$ is of sine type if and only if there exist positive constants $m$, $M$ and $h$ such that
$m\leq |\omega(\sqrt{\lambda})|e^{-\sigma |\im\sqrt{\lambda}|}\leq M$
for all $\sql$ satisfying $|\im\sql|>h$.
\end{lemma}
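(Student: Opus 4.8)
The statement is an equivalence, so the plan is to prove the two implications separately, treating the passage from the growth estimate to Definition \ref{Definition 4.1} as routine and reserving the difficulty for the converse. For the easy direction, assume the two-sided bound $m\le|\omega(\sql)|e^{-\sigma|\im\sql|}\le M$ for $|\im\sql|>h$ and verify the three conditions of Definition \ref{Definition 4.1}. The lower bound gives $|\omega(\sql)|\ge me^{\sigma|\im\sql|}>0$ whenever $|\im\sql|>h$, so $\omega$ has no zeros in $\{|\im\sql|>h\}$, placing all zeros in a horizontal strip as required by condition (1). For any level $h_1$ with $|h_1|>h$, the bound restricted to the line $\im\sql=h_1$ reads $me^{\sigma|h_1|}\le|\omega(\sql)|\le Me^{\sigma|h_1|}$, which is condition (2). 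Finally, taking $\sql=iy$ and letting $y\to\pm\infty$ yields $\log|\omega(iy)|/|y|\to\sigma$, so the indicator satisfies $h_\omega(\pm\pi/2)=\sigma$; since $\omega$ has exponential type $\sigma$ the type in either half-plane is at most $\sigma$, hence equal to $\sigma$ in both, which is condition (3).

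For the converse I would first establish the upper estimate by a half-plane Phragm\'en--Lindel\"of argument. Writing $z=\sql$, consider $F_+(z)=\omega(z)e^{i\sigma z}$, so that $|F_+(z)|=|\omega(z)|e^{-\sigma\im z}$. By condition (2) the function $F_+$ is bounded on the line $\im z=h_1$, and since $\omega$ has exponential type $\sigma$ one has $\limsup_{y\to+\infty}y^{-1}\log|F_+(iy)|=h_\omega(\pi/2)-\sigma\le0$, where $h_\omega$ is the indicator of $\omega$. The half-plane Phragm\'en--Lindel\"of principle then forces $F_+$ to be bounded on $\{\im z\ge h_1\}$, i.e.\ $|\omega(z)|\le Me^{\sigma\im z}$ there; the symmetric argument applied to $F_-(z)=\omega(z)e^{-i\sigma z}$ handles the lower half-plane, and together they furnish the upper bound for $|\im z|$ large.

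The lower bound is the main obstacle. Above the strip $F_+$ is zero-free by condition (1) and bounded by the previous step, so one is tempted to apply Phragm\'en--Lindel\"of to $1/F_+$; this fails, however, because controlling the growth of $1/F_+$ along the imaginary axis is tantamount to the very lower bound we are after. The correct route, following \cite{LO}, goes through the fine structure of the zero set: the zeros of a sine-type function are not merely confined to a strip but are uniformly separated and regularly distributed with density $\sigma/\pi$, and the delicate point---where I expect condition (2) to be essential---is to extract this separation from Definition \ref{Definition 4.1}. Once separation and regular distribution are in hand, the Hadamard canonical product for $\omega$ admits a lower estimate $|\omega(z)|\ge me^{\sigma|\im z|}$ at every point lying at least a fixed positive distance from all zeros; choosing $h$ strictly larger than the half-width of the strip guarantees that every $z$ with $|\im z|>h$ is uniformly separated from the zero set, and the estimate follows. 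Since the lemma is the classical result recalled from \cite{LO} and \cite[Proposition 11.2.19]{MP0}, in practice I would cite it directly, reproducing Levin's canonical-product computation only if a self-contained lower bound were demanded.
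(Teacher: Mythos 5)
The paper gives no proof of this lemma at all: it is stated as a recalled, classical equivalence with pointers to \cite{LO} and \cite[Proposition 11.2.19]{MP0}, so there is no in-paper argument to match yours against. Your sketch is consistent with that and adds genuine content: the forward direction (two-sided growth bound implies conditions (1)--(3) of Definition \ref{Definition 4.1}) is correctly and completely worked out, and your Phragm\'en--Lindel\"of derivation of the upper estimate $|\omega(z)|\le Me^{\sigma|\im z|}$ in the converse is sound, since $h_\omega(\pm\pi/2)\le\sigma$ follows from the type alone and condition (2) bounds $F_\pm$ on the reference line. You also correctly identify the lower estimate as the only substantive point and, like the paper, ultimately defer it to Levin--Ostrovskii. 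One inaccuracy in the sketch: the zeros of a sine-type function are \emph{not} in general uniformly separated (a product of sine-type functions is again of sine type but can have multiple or clustered zeros, e.g.\ $\sin^2(\sql\ell)$); what Levin's argument actually uses is that the zero set has uniformly bounded density in unit disks, which does follow from conditions (1)--(2) via Jensen's inequality and suffices for the canonical-product lower bound at points whose distance to the zero set is bounded below --- in particular everywhere outside a slightly widened strip. Since you cite the same sources the paper does for exactly this step, the slip is harmless, but the word ``separated'' should be replaced by ``of bounded density.''
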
 

We stress that in the next lemma the polynomial $P_n$ is not constant.
\begin{lemma}\label{Lemma 4.2}
Let $P_n(z)=a_nz^n+a_{n-1}z^{n-1}+...+a_0$ be a polynomial of degree $n\geq 1$ with real coefficients. Then $\sql\mapsto P_n(\cos\sqrt{\lambda}\ell)$ is a sine type function.
\end{lemma}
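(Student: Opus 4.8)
The plan is to verify the criterion of Lemma \ref{Definition 4.2} directly, avoiding any explicit description of the zeros of $\omega(\sql):=P_n(\cos(\sql\ell))$. First note that $\omega$ is entire, being a polynomial in the entire function $\cos(\sql\ell)$, and that its exponential type equals $\sigma:=n\ell>0$: the bound $|\cos(\sql\ell)|\le\cosh(\ell|\im\sql|)\le e^{\ell|\im\sql|}$ gives $|\omega(\sql)|\le\big(\sum_{j=0}^n|a_j|\big)e^{n\ell|\im\sql|}$, while the lower estimate below shows the type is not smaller. Here the hypothesis $n\ge1$ is essential: for a constant $P_0$ the exponential type would be $0$ and, as observed above, a constant is never of sine type.

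The key estimate is the behaviour of $|\cos(\sql\ell)|$ away from the real axis. Writing $\cos(\sql\ell)=\tfrac12(e^{i\sql\ell}+e^{-i\sql\ell})$ and factoring out the dominant exponential (namely $e^{-i\sql\ell}$ when $\im\sql>0$ and $e^{i\sql\ell}$ when $\im\sql<0$), one obtains for all $\sql$ with $\im\sql\ne0$ the two-sided bound
\[
\tfrac12 e^{\ell|\im\sql|}\big(1-e^{-2\ell|\im\sql|}\big)\le|\cos(\sql\ell)|\le\tfrac12 e^{\ell|\im\sql|}\big(1+e^{-2\ell|\im\sql|}\big).
\]
In particular $|\cos(\sql\ell)|\to\infty$ and $|\cos(\sql\ell)|e^{-\ell|\im\sql|}\to\tfrac12$ as $|\im\sql|\to\infty$.

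To finish, I would combine this with the growth of $P_n$ at infinity. Since $a_n\ne0$, there is $R_0>0$ with $\tfrac12|a_n|\,|w|^n\le|P_n(w)|\le 2|a_n|\,|w|^n$ whenever $|w|\ge R_0$. Choosing $h$ so large that $\tfrac12 e^{\ell h}(1-e^{-2\ell h})\ge R_0$ forces $|w|=|\cos(\sql\ell)|\ge R_0$ for all $|\im\sql|>h$; then, setting $w=\cos(\sql\ell)$ and multiplying the displayed bounds raised to the $n$-th power by $e^{-n\ell|\im\sql|}$, I obtain positive constants
\[
m=\frac{|a_n|}{2^{\,n+1}}\big(1-e^{-2\ell h}\big)^n,\qquad M=\frac{2|a_n|}{2^{\,n}}\big(1+e^{-2\ell h}\big)^n
\]
satisfying $m\le|\omega(\sql)|e^{-n\ell|\im\sql|}\le M$ for every $\sql$ with $|\im\sql|>h$. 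By Lemma \ref{Definition 4.2} this shows $\omega$ is of sine type with $\sigma=n\ell$.

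The only genuine point requiring care is the \emph{uniformity} of the two-sided estimate over the whole region $|\im\sql|>h$, as opposed to a mere asymptotic statement as $|\im\sql|\to\infty$; this is handled by noting that $(1-e^{-2\ell|\im\sql|})^n$ increases and $(1+e^{-2\ell|\im\sql|})^n$ decreases in $|\im\sql|$, so the extremal values are attained at $|\im\sql|=h$, which is exactly what fixes $m$ and $M$. An alternative, slightly longer route is to factor $P_n(\cos(\sql\ell))=a_n\prod_{j=1}^n(\cos(\sql\ell)-z_j)$ over the (possibly complex) roots $z_j$ of $P_n$, to check that each factor $\cos(\sql\ell)-z_j$ is of sine type by the same estimate with $n=1$, and to invoke the stated fact that a product of sine type functions is again of sine type.
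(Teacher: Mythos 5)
Your proof is correct, and its main line is genuinely different from the paper's. The paper disposes of the lemma in three lines: it factors $P_n(\cos(\sql\ell))=a_n\prod_{i=1}^n\bigl(\cos(\sql\ell)-z_i\bigr)$ over the roots $z_i$ of $P_n$, observes via Lemma \ref{Definition 4.2} that each factor $\sql\mapsto\cos(\sql\ell)-c$ is of sine type and of exponential type $\ell$, and then invokes the stated fact that a product of sine type functions is again of sine type (with the types adding up to $n\ell$). This is exactly the ``alternative, slightly longer route'' you sketch in your last sentence — so in effect you have both proofs, with the paper's chosen one relegated to a remark. Your primary argument instead verifies the Levin--Ostrovskii criterion of Lemma \ref{Definition 4.2} head-on: the two-sided bound $\tfrac12 e^{\ell|\im\sql|}(1-e^{-2\ell|\im\sql|})\le|\cos(\sql\ell)|\le\tfrac12 e^{\ell|\im\sql|}(1+e^{-2\ell|\im\sql|})$ is correct (the lower envelope is $\sinh(\ell|\im\sql|)$, which is increasing, so your choice of $h$ does force $|\cos(\sql\ell)|\ge R_0$ throughout $|\im\sql|>h$), the comparison $\tfrac12|a_n||w|^n\le|P_n(w)|\le2|a_n||w|^n$ for $|w|\ge R_0$ is standard, and your monotonicity remark makes the constants $m,M$ uniform rather than merely asymptotic. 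What each approach buys: the paper's factorization is shorter but leans on the unproved (cited) closure of sine type functions under products, whereas your direct verification is self-contained, produces explicit constants $m$, $M$, $h$, and makes visible exactly where the hypothesis $n\ge1$ enters (a constant has exponential type $0$ and cannot satisfy the criterion) — a point the paper only flags in the sentence preceding the lemma. No gaps.
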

\begin{proof}
Using Lemma \ref{Definition 4.2} it is easy to see that the function $\sql\mapsto\cos\sqrt{\lambda}\ell$ is
a function  of exponential type $\ell$ and is a sine type function;
the same is true for $\sql\mapsto(\cos\sqrt{\lambda}\ell-c)$ where $c$ is a constant. The product 
$P_n(\cos\sqrt{\lambda}\ell)=a_n\prod_{i=1}^n(\cos\sqrt{\lambda}\ell-z_i)$, where $z_i$ are the zeros of $P_n$,  is therefore a function of exponential type $n\ell$ and is a sine type function. 
\end{proof}

\begin{lemma}\label{Lemma 4.2new}
Let $\omega_1$ and $\omega_2$ be entire functions of $\sql$ of equal exponential  types $\sigma>0$. Also, we assume that $\omega_1$ and $\omega_2$ are of sine type. Furthermore, let us assume that 
the derivatives of $\omega_2$ satisfy $\omega_2(0)=\omega_2'(0)=\ldots=\omega_2^{(n-1)}(0)=0$ for some $n=1,2,\ldots$ so that the function $\omega:\sql\mapsto\omega_1(\sql)+(\sql)^{-n}\omega_2(\sql)$ is entire.
Then the function $\omega$ is also of sine type.
\end{lemma}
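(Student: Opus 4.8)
The plan is to verify, for the function $\omega$, the two-sided asymptotic bound that characterizes sine type functions, namely the criterion of Lemma~\ref{Definition 4.2}. First I would check that $\omega$ is genuinely entire and of finite exponential type $\sigma$. Since $\omega_2$ vanishes to order at least $n$ at the origin, the quotient $\sql\mapsto(\sql)^{-n}\omega_2(\sql)$ has a removable singularity there and is entire; dividing by a fixed power of $\sql$ affects growth only polynomially and hence does not change the exponential type, so $(\sql)^{-n}\omega_2$ is again of exponential type $\sigma$. Consequently $\omega=\omega_1+(\sql)^{-n}\omega_2$ is a sum of two functions of exponential type $\sigma$, and is therefore entire of exponential type at most $\sigma$.

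Next I would invoke Lemma~\ref{Definition 4.2} for the two given sine type functions $\omega_1,\omega_2$: there are constants $m_1,M_1,m_2,M_2>0$ and a height $h_0\ge1$ such that
\[
m_j\le|\omega_j(\sql)|\,e^{-\sigma|\im\sql|}\le M_j,\quad j=1,2,
\]
for all $\sql$ with $|\im\sql|>h_0$. The upper bound for $\omega$ is then immediate from the triangle inequality together with the trivial estimate $|\sql|\ge|\im\sql|>h_0\ge1$, which gives $|\sql|^{-n}\le1$: one obtains $|\omega(\sql)|\,e^{-\sigma|\im\sql|}\le M_1+|\sql|^{-n}M_2\le M_1+M_2$ throughout the region $|\im\sql|>h_0$.

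The heart of the argument, and the step I expect to be the main obstacle, is the lower bound, where one must dominate the perturbation $(\sql)^{-n}\omega_2$ by the lower bound already available for $\omega_1$. Using the reverse triangle inequality and the estimates above, for $|\im\sql|>h_0$ one has
\[
|\omega(\sql)|\,e^{-\sigma|\im\sql|}\ge m_1-|\sql|^{-n}M_2\ge m_1-|\im\sql|^{-n}M_2.
\]
Since $|\im\sql|^{-n}\to0$ as $|\im\sql|\to\infty$, I would enlarge the threshold, choosing $h\ge h_0$ so large that $|\im\sql|^{-n}M_2\le m_1/2$ whenever $|\im\sql|>h$; this forces $|\omega(\sql)|\,e^{-\sigma|\im\sql|}\ge m_1/2>0$ in that region. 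The key point making this work is precisely the elementary inequality $|\sql|\ge|\im\sql|$, which lets a large horizontal-strip height render the quotient $|\sql|^{-n}$ negligible.

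Finally I would assemble the conclusion. The two displayed estimates show that $\omega$ satisfies the two-sided bound of Lemma~\ref{Definition 4.2} with exponent $\sigma$ for all $|\im\sql|>h$; in particular the lower bound yields $|\omega(\sql)|\ge(m_1/2)e^{\sigma|\im\sql|}$, so the exponential type of $\omega$ is at least $\sigma$, which together with the upper bound from the first paragraph pins the type at exactly $\sigma>0$. Lemma~\ref{Definition 4.2} then applies and certifies that $\omega$ is of sine type, completing the proof.
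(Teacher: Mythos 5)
Your proposal is correct and follows essentially the same route as the paper: both apply the two-sided criterion of Lemma~\ref{Definition 4.2} to $\omega_1$ and $\omega_2$, use the triangle inequality together with $|\sql|\ge|\im\sql|$ to get the upper bound, and enlarge the strip height $h$ until the perturbation $|\sql|^{-n}M_2$ drops below $m_1$ so that the lower bound stays positive. The only cosmetic difference is that the paper fixes $h>(M_2/m_1)^{1/n}$ explicitly while you absorb the same condition into the requirement $|\im\sql|^{-n}M_2\le m_1/2$.
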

\begin{proof}
 Using Lemma \ref{Definition 4.2} we find  positive constants $m_i,M_i$ and $h_i$ such that $m_i\leq |\omega_i(\sqrt{\lambda})| e^{-\sigma |\im \sqrt{\lambda}|} \leq M_i$ for all $\sqrt{\lambda}$
 so that $|\im\sqrt{\lambda}|>h_i$  and $i=1,2$.     
 Choose $h>\max\{h_1,h_2, (M_2/m_1)^{1/n}\}$. Then, for $|\sqrt{\lambda}|\ge|\im\sqrt{\lambda}|>h$,
\begin{equation*}\begin{split}
 \big|\omega_1(\sqrt{\lambda})&+(\sqrt{\lambda})^{-n}\omega_2(\sqrt{\lambda})\big|
 e^{-\sigma|\im\sqrt{\lambda}|}\leq M_1+|\sqrt{\lambda}|^{-n}M_2\leq M_1+M_2/h^n,\\       
 \big|\omega_1(\sqrt{\lambda})&+(\sqrt{\lambda})^{-n}\omega_2(\sqrt{\lambda})\big|
 e^{-\sigma|\im\sqrt{\lambda}|}\geq m_1-|\sqrt{\lambda}|^{-n}M_2 \geq m_1-M_2/h^n,
 \end{split}
 \end{equation*} 
 as required in Lemma \ref{Definition 4.2} to check that $\omega$ is of sine type. 
\end{proof}

We recall notation $\phi(\lambda, b_1,\ldots,b_p)$ for the characteristic function of the Robin problem, formula \eqref{2.10} in Theorem \ref{Theorem 2.2} relating the characteristic function and $\phi$, $\phi_i$,$\phi_{i_1i_2}$,\ldots, $\phi_{1\ldots p}$, notations \eqref{2.2} for the functions $\psi_i$, $\psi_{i_1i_2}$, \ldots, $\psi_{1\ldots p}$ and formulas \eqref{2.18}--\eqref{2.20new} expressing $\phi$'s via $\psi$'s.
\begin{lemma}\label{lemmaS} The function $\sql\mapsto(\sql)^{r-1}\phi(\lambda,b_1,\ldots,b_p)$ is \textit{not}  of sine type for each $r=1,2,\ldots,p$.
\end{lemma}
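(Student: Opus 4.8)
The plan is to pin down the precise growth of $\phi(\lambda,b_1,\ldots,b_p)$ on horizontal lines far from the real axis and to show that, because of an extra polynomial factor $\sql$, this growth is strictly too fast for the sine-type characterization in Lemma \ref{Definition 4.2} to hold. I write $s:=\sql$ and use the decomposition $\phi=f+g$ from \eqref{deffg} together with the tree formulas \eqref{2.18}--\eqref{2.20new}, so that $f(s)=-s\sin(s\ell)\wtp(\cos(s\ell))$ and each summand of $g$ is of the form $b_{i_1}\cdots b_{i_r}\phi_{i_1\ldots i_r}(\lambda)$ with $\phi_{i_1\ldots i_r}(\lambda)=(\sin(s\ell)/s)^{r-1}\psi_{i_1\ldots i_r}(\cos(s\ell))$.

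First I would determine the leading growth as $|\im s|\to\infty$. Since $\wtp=\psi/(z^2-1)$ is a polynomial of degree $p-2$ whose leading coefficient $c_\psi:=(-1)^p\prod_{v\in\mathcal{V}}d(v)$ is nonzero (every degree in a tree is positive), and since $|\cos(s\ell)|\sim\tfrac12 e^{\ell|\im s|}$ and $|\sin(s\ell)|\sim\tfrac12 e^{\ell|\im s|}$, a short computation gives
\[
|f(s)|\sim \frac{|c_\psi|}{2^{p-1}}\,|s|\,e^{(p-1)\ell|\im s|}\qquad\text{as }|\im s|\to\infty;
\]
in particular $f$, and hence $\phi$ and $s^{r-1}\phi$, are of exponential type $(p-1)\ell$. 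For the summands of $g$, the factor $(\sin(s\ell)/s)^{r-1}$ contributes $O(e^{(r-1)\ell|\im s|}/|s|^{r-1})$ while $\psi_{i_1\ldots i_r}(\cos(s\ell))$ contributes $O(e^{(p-r)\ell|\im s|})$, since $\psi_{i_1\ldots i_r}=\det(-zD_{i_1\ldots i_r}+A_{i_1\ldots i_r})$ has degree $p-r$; hence $|\phi_{i_1\ldots i_r}(\lambda)|=O(e^{(p-1)\ell|\im s|}/|s|^{r-1})$, the largest contribution (at $r=1$) being $O(e^{(p-1)\ell|\im s|})$ \emph{with no factor} $|s|$. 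The key structural point is that the extra factor $s$ occurs only in $f$ (coming from $\phi(\lambda,0,\ldots,0)$ in \eqref{2.18}) and in none of the $\phi_{i_1\ldots i_r}$, so no term of $g$ reaches the order $|s|\,e^{(p-1)\ell|\im s|}$ and the leading term of $f$ cannot be cancelled. Therefore $|\phi(s)|\ge|f(s)|-|g(s)|\ge c\,|s|\,e^{(p-1)\ell|\im s|}$ with $c>0$, for all $s$ with $|s|$ and $|\im s|$ large enough.

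Finally I would contradict the upper bound in Lemma \ref{Definition 4.2}. If $s^{r-1}\phi$ were of sine type then, its type being $(p-1)\ell$, there would exist $M,h>0$ with $|s^{r-1}\phi(s)|e^{-(p-1)\ell|\im s|}\le M$ for all $|\im s|>h$; but fixing $\im s=h+1$ and letting $\re s\to+\infty$ yields $|s^{r-1}\phi(s)|e^{-(p-1)\ell|\im s|}\ge c\,|s|^{r}\to\infty$, a contradiction valid for every $r=1,\ldots,p$. I expect the only real work to be the bookkeeping of the second paragraph, namely verifying that $f$ dominates $g$ by exactly one power of $s$; this rests entirely on the factor $s$ being present in $\phi(\lambda,0,\ldots,0)$ but absent from all auxiliary functions $\phi_{i_1\ldots i_r}$, together with the non-vanishing of $c_\psi$.
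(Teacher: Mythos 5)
Your proof is correct, but it takes a genuinely different route from the paper. You argue directly: you compute the exact growth of $f(\sql)=-\sql\sin(\sql\ell)\wtp(\cos(\sql\ell))$ on horizontal lines with $|\im\sql|$ large, using that $\wtp$ has degree $p-2$ with nonvanishing leading coefficient $(-1)^p\prod_v d(v)$, observe that every auxiliary term $\phi_{i_1\ldots i_r}(\lambda)=(\sin(\sql\ell)/\sql)^{r-1}\psi_{i_1\ldots i_r}(\cos(\sql\ell))$ is $O\big(e^{(p-1)\ell|\im\sql|}/|\sql|^{r-1}\big)$ and hence lacks the extra factor $|\sql|$, and conclude that $|\phi(\lambda,b_1,\ldots,b_p)|\ge c\,|\sql|\,e^{(p-1)\ell|\im\sql|}$, which kills the upper bound of Lemma \ref{Definition 4.2} at once for every $r\ge1$. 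The paper instead argues by contradiction: assuming $(\sql)^{r-1}\phi(\lambda,b_1,\ldots,b_p)$ is of sine type, it rearranges \eqref{2.10} to write $\phi(\lambda,0,\ldots,0)$ as a sine-type function plus an admissible perturbation in the sense of Lemma \ref{Lemma 4.2new}, contradicting the fact that $(\sql)^{n-1}\big(\phi(\lambda,0,\ldots,0)-c\big)$ is never of sine type; this forces an inductive case analysis over whether the successive sums $\sum b_{i_1}\cdots b_{i_r}\psi_{i_1\ldots i_r}(\cos(\sql\ell))$ are constant or identically zero. Your version is more self-contained and quantitative --- it isolates the ``one extra power of $\sql$'' phenomenon explicitly and needs neither Lemma \ref{Lemma 4.2new} nor the degeneracy cases --- while the paper's version reuses machinery already required elsewhere in Section 4 and avoids tracking leading coefficients. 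The only point to tighten in your write-up: your lower bound on $|\phi|$ is established for $|\im\sql|$ larger than some $h_0$, so in the final step you should evaluate on the line $\im\sql=\max\{h,h_0\}+1$ rather than $\im\sql=h+1$; this is a cosmetic fix.
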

\begin{proof}
We temporarily denote $\omega(\sql):=\phi(\lambda,0,\ldots,0)$, fix $r$, temporarily denote the function in the lemma by $\omega_2(\sql)$ and, seeking a contradiction, suppose that $\omega_2$ is of sine type. Formula \eqref{2.18} and Lemma \ref{Definition 4.2} show that the function \begin{equation}
\label{notst}
\sql\mapsto(\sql)^{n-1}\big(\omega(\sql)-c\big) \text{ is not of sine type }
\end{equation}
for each $n=1,\ldots,p$ and any constant $c\in\mathbb{R}$.
We introduce the function 
\begin{align}\label{eq44}
\omega_1&(\sql):=-\sum\nolimits_{i=1}^pb_i\psi_i(\cos(\sql\ell))\\&-(\sql)^{-1}\sin(\sql\ell)
\sum\nolimits_{1\le i_1<i_2\le p}b_{i_1}b_{i_2}\psi_{i_1i_2}(\cos(\sql\ell))\notag\\
&\quad-(\sql)^{-2}(\sin(\sql\ell))^2
\sum\nolimits_{1\le i_1<i_2<i_3\le p}b_{i_1}b_{i_2}b_{i_3}\psi_{i_1i_2i_3}(\cos(\sql\ell))\notag\\
&\qquad-
\ldots-b_1\ldots b_p,\notag
\end{align}
and re-write formula \eqref{2.10} as 
\begin{equation}\label{omeq}
\omega(\sql)=\omega_1(\sql)+(\sql)^{-(r-1)}\omega_2(\sql).
\end{equation}
 If the first sum in the right hand side of \eqref{eq44} is not a constant then it is a sine type function by Lemma \ref{Lemma 4.2} and then $\omega$ is a sine type function by \eqref{omeq} and Lemma \ref{Lemma 4.2new}, a contradiction with \eqref{notst} that proves the lemma. So let us suppose that the first sum in \eqref{eq44}  is a constant and denote it by $-c$. 
 Subtracting $c$ and multiplying \eqref{omeq} by $\sql$ we arrive at the identity
 \begin{equation}\label{omeq2}\sql\big(\omega(\sql)-c\big)=\sql\big(\omega_1(\sql)-c\big)+(\sql)^{-(r-2)}\omega_2(\sql),
 \end{equation}
 where $\sql\big(\omega_1(\sql)-c\big)$ starts with the expression \begin{equation}\label{expr}
 -\sin(\sql\ell)
\sum\nolimits_{1\le i_1<i_2\le p}b_{i_1}b_{i_2}\psi_{i_1i_2}(\cos(\sql\ell)).\end{equation}
If the  sum here is not identically zero then this expression is a sine type function by Lemma \ref{Lemma 4.2} and so is
$\sql\mapsto\sql\big(\omega(\sql)-c\big)$ by 
\eqref{omeq2} and Lemma \ref{Lemma 4.2new} again contradiction \eqref{notst} and proving the lemma. If the sum in \eqref{expr} is identically zero, then instead of getting \eqref{omeq2} we multiply \eqref{omeq} by $(\sql)^2$, and deal with the sum $\sum\nolimits_{1\le i_1<i_2<i_3\le p}$ in \eqref{eq44}, finishing the proof inductively.
\end{proof}

We are ready to present the last main result of this paper that describes the  spectral data in the inverse problem needed to recover the coefficients in the Robin boundary conditions.

\begin{theorem}\label{Theorem 4.2} Let $G$ be an equilateral compact tree with $g$ edges and assume that the potential $q_j$ is identically zero for all $j=1,\ldots,g$. Let $\lambda_m$, $m=1,\ldots,2^p-2$, be any $2^p-2$ given distinct eigenvalues of the Robin problem \eqref{2.3}--\eqref{2.6}. Then there exists yet another eigenvalue, $\lambda_{2^p-1}$, of the Robin problem such that the determinant in \eqref{eq4.2} satisfies $\varphi(\lambda_1,\ldots,\lambda_{2^p-2},\lambda_{2^p-1})\neq0$. Therefore, the Robin coefficients $b_1,\ldots,b_p$ can be uniquely expressed from system \eqref{4.1} via $\phi(\lambda_m,0,\ldots,0)$, $\phi_i(\lambda_m)$, $\phi_{i_1,i_2}(\lambda_m)$, \ldots, $\phi_{1\ldots p}(\lambda_m)$, $m=1,\ldots,2^p-1$.
\end{theorem}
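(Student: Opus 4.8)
The plan is to regard $\varphi(\lambda_1,\ldots,\lambda_{2^p-2},\lambda)$ as an entire function of $\sql$ and to produce the missing eigenvalue by a growth comparison. Expanding the determinant in \eqref{eq4.2} along its last row gives
\[
\Lambda(\lambda):=\varphi(\lambda_1,\ldots,\lambda_{2^p-2},\lambda)=\sum\nolimits_{\emptyset\neq J\subseteq\{1,\ldots,p\}}c_J\,\phi_J(\lambda),
\]
where for $J=\{i_1<\cdots<i_s\}$ we abbreviate $\phi_J:=\phi_{i_1\ldots i_s}$ and $\psi_J:=\psi_{i_1\ldots i_s}$, and the cofactors $c_J$ are built from the fixed rows $\lambda_1,\ldots,\lambda_{2^p-2}$, hence are independent of $\lambda$. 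Each $\phi_J$ is entire in $\sql$, so $\Lambda$ is entire, and $\Lambda(\lambda_j)=0$ for $j=1,\ldots,2^p-2$ since then \eqref{eq4.2} has two equal rows. Thus the theorem reduces to the claim that $\Lambda$ does \emph{not} vanish at every Robin eigenvalue: once a Robin eigenvalue $\lambda_{2^p-1}$ with $\Lambda(\lambda_{2^p-1})\neq0$ is found, the square system \eqref{4.1} has nonzero determinant, its $2^p-1$ unknowns $b_i,\,b_{i_1}b_{i_2},\ldots$ are uniquely determined, and reading off the singleton unknowns recovers $b_1,\ldots,b_p$.

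First I would record that $\Lambda\not\equiv0$. Indeed, $\Lambda\equiv0$ would mean that the vector $(\phi_J(\lambda))_J$ lies in the span of the $2^p-2$ fixed rows for all $\lambda$, which forces the $2^p-1$ functions $\{\phi_J:\emptyset\neq J\}$ to be linearly dependent. Hence $\Lambda\not\equiv0$ for every admissible choice of the $\lambda_j$ provided the characteristic functions $\{\phi_J\}$ are linearly independent; this is the genuine non-degeneracy condition underlying the inverse problem, and I would treat it as the standing structural input.

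The heart of the matter is to show $\Lambda$ cannot vanish at \emph{all} Robin eigenvalues, and here the sine-type machinery of Lemmas \ref{Lemma 4.2}--\ref{Lemma 4.2new} and Lemma \ref{lemmaS} is decisive. By \eqref{2.19}--\eqref{2.20new} the summand with $|J|=s$ equals $(\sql)^{-(s-1)}(\sin\sql\ell)^{s-1}\psi_J(\cos\sql\ell)$ with $\deg\psi_J=p-s$; the dominant contribution is the one carrying the least power of $\sql$, so by Lemma \ref{Lemma 4.2} and repeated application of Lemma \ref{Lemma 4.2new} the function $(\sql)^{m}\Lambda$ is of sine type of exponential type at most $(p-1)\ell$ for some integer $m\in\{0,\ldots,p-1\}$. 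Lemma \ref{Definition 4.2} then gives $|\Lambda(\sql)|\le C|\sql|^{-m}e^{(p-1)\ell|\im\sql|}$ off a horizontal strip. On the other hand, by \eqref{2.18} we have $\phi(\lambda,b_1,\ldots,b_p)=-\sql\sin(\sql\ell)\wtp(\cos\sql\ell)+g(\sql)$, whose leading term carries an \emph{extra} factor $\sql$ relative to every $\phi_J$; since $\wtp$ has exact degree $p-2$, a short estimate yields $|\phi(\lambda,b_1,\ldots,b_p)|\ge c|\sql|\,e^{(p-1)\ell|\im\sql|}$ off the strip for $|\sql|$ large. That this extra factor cannot be removed is exactly the content of Lemma \ref{lemmaS}, which asserts that $\phi(\lambda,b_1,\ldots,b_p)$ is not of sine type.

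Finally, suppose for contradiction that $\Lambda$ vanishes at every Robin eigenvalue, i.e.\ at every zero of $\phi(\lambda,b_1,\ldots,b_p)$. By the Rouch\'e argument in the proof of Theorem \ref{Theorem 3.1} these zeros are simple for all large $k$, so $F:=\Lambda/\phi(\lambda,b_1,\ldots,b_p)$ has poles only at finitely many low-lying eigenvalues; clearing them by a fixed polynomial factor makes $F$ entire. The two growth bounds give $|F(\sql)|\le C|\sql|^{-m-1}\to0$ off the strip, hence $F$ is bounded on two horizontal lines; being an entire quotient of functions of exponential type it is itself of finite exponential type, so Phragm\'en--Lindel\"of bounds $F$ throughout the intermediate strip. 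Then $F$ is a bounded entire function, hence constant by Liouville, and since $F\to0$ it vanishes identically, forcing $\Lambda\equiv0$ and contradicting the second paragraph. Therefore some Robin eigenvalue $\lambda_{2^p-1}$ satisfies $\Lambda(\lambda_{2^p-1})\neq0$, and the proof is complete. I expect the main obstacle to be precisely this last step: the multiplicity bookkeeping needed to make $F$ genuinely entire, and the rigorous passage from the one-sided off-strip estimates to the Phragm\'en--Lindel\"ed/Liouville conclusion.
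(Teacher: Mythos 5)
Your setup coincides with the paper's: you expand \eqref{eq4.2} along the last row, observe that $\Lambda=\omega_d$ is a fixed linear combination of the $\phi_J$'s vanishing at $\lambda_1,\ldots,\lambda_{2^p-2}$, reduce the theorem to finding a Robin eigenvalue at which $\omega_d\neq0$, and invoke the same dichotomy the paper uses, namely that a suitable power of $\sql$ times $\omega_d$ is of sine type while $\phi(\lambda,b_1,\ldots,b_p)$ is not (Lemma \ref{lemmaS}). The divergence, and the genuine gap, is in the endgame. The paper concludes from the product representation \eqref{4.2}: a sine-type function is determined by its zeros, so the zero set of the non-sine-type $\omega_c$ cannot sit inside that of the sine-type $(\sql)^m\omega_d$, whence some zero of $\omega_c$ is not a zero of $\omega_d$. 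You instead form $F=\Lambda/\phi(\cdot,b_1,\ldots,b_p)$ and aim at Liouville. As written this fails: $F$ is entire only if every zero of $\phi$ is matched by a zero of $\Lambda$ of at least equal multiplicity, and the low-lying zeros of $\phi$ need not be simple. Your fix --- ``clearing them by a fixed polynomial factor'' --- destroys the conclusion: $PF$ is then an entire function of polynomial growth, hence a \emph{polynomial}, so $F$ is merely a rational function $Q/P$ vanishing at infinity, and $F\equiv0$ does not follow. Indeed $\Lambda=(Q/P)\,\phi$ with $\deg Q<\deg P$ is perfectly consistent with both of your growth bounds, so no contradiction is reached; excluding it would need a rational-multiplier strengthening of Lemma \ref{lemmaS} that neither you nor the paper supplies.

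A second omission: the claim that $(\sql)^m\Lambda$ is of sine type for some $m$ breaks down when the $r=1$ block $\sum_i\alpha_i\psi_i(\cos(\sql\ell))$ degenerates to a nonzero constant $c$ --- a constant is not of sine type, and no power of $\sql$ repairs $\Lambda=c+O(1/\sql)\cdot(\text{sine type})$. The paper treats this case separately and directly, evaluating $\omega_d$ along the eigenvalue sequence $\sqrt{\lambda_k^{(1)}}=\pi k/\ell+o(1)$ of Theorem \ref{Theorem 3.1} to get $\omega_d(\sqrt{\lambda_k^{(1)}})\to c\neq0$, which already yields the desired eigenvalue. (Your explicit standing assumption that the $\phi_J$ are linearly independent plays the same role as the paper's implicit assumption that not all blocks $\omega_r$ vanish identically; on that point you are on equal footing.) To repair the argument, either adopt the paper's zero-set conclusion via \eqref{4.2}, or restrict the quotient argument to the case where $F$ is genuinely entire and handle the degenerate cases separately as the paper does.
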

\begin{proof}
Expanding the determinant in \eqref{4.1} by the last row, we conclude that the function $\sql\mapsto\varphi(\lambda_1,\ldots,\lambda_{2^p-2},\lambda)$ is a  linear combination (with real coefficients that we will denote by $\alpha_1,\alpha_2, \ldots$, $\alpha_{12},\ldots, \alpha_{1\ldots p}$) of the functions 
$\sql\mapsto\phi_1(\lambda)$, $\phi_2(\lambda)$, \ldots,
 $\phi_{12}(\lambda)$, \ldots, $\phi_{1,...,p}(\lambda)$ computed via $\psi_1,\ldots,\psi_{1\ldots p}$ in \eqref{2.19}--\eqref{2.20new}. 
 We introduce notation  $\omega_c(\sql)=\phi(\lambda,b_1,\ldots,b_p)$ for the characteristic function of the Robin problem,
  $\omega_d(\sql)=\varphi(\lambda_1,\ldots,\lambda_{2p-2},\lambda)$ for the determinant \eqref{eq4.2},  and define functions $\omega_r$ for $r=1,\ldots,p$ by
 \begin{align}\label{eq49}
 \omega_r(\sql):=(\sin(\sql\ell))^{-(r-1)}\sum\nolimits_{1\le i_1<\ldots<i_r\le p}\alpha_{i_1\ldots i_r}\psi_{i_1\ldots i_r}(\cos(\sql\ell)), 
 \end{align}
 where the polynomials $\psi_{i_1\ldots i_r}$ are defined in \eqref{2.2}. Then, by \eqref{2.19}--\eqref{2.20new},
 \begin{align}\label{eq410}
 \omega_d(\sql)=\omega_1(\sql)+(\sql)^{-1}\omega_2(\sql)
 +\ldots+(\sql)^{-(p-1)}\omega_p(\sql).
 \end{align}
Therefore, to prove Theorem \ref{Theorem 4.2} we need to find $\sqrt{\lambda_{2^p-1}}$ which is  
\begin{equation}\label{notzeros}
\text{a zero of the function $\omega_c$ but is not a zero of the function $\omega_d$.}
\end{equation}
 Indeed, if this is the case then there is a Robin eigenvalue $\lambda_{2^p-1}$ such that, as required in the theorem, \begin{equation}\label{Ntz}\begin{split}
 \omega_c(\sqrt{\lambda_{2^p-1}})&=\phi(\lambda_{2^p-1}, b_1,\ldots,b_p)=0 \text{ but }\\ \omega_d(\sqrt{\lambda_{2^p-1}})&=\varphi(\lambda_1,\ldots,\lambda_{2^p-2},\lambda_{2^p-1})\neq0; \end{split}\end{equation}
 the last inequality and the structure of the determinant in \eqref{eq4.2} automatically implies that $\lambda_{2^p-1}$ differs from all other 
 $\lambda_{m}$, $m=1, \ldots, 2^p-2$.
 
 Formula \eqref{eq49} and Lemma \ref{Lemma 4.2} show that $\omega_1$ is either a function of sine type or a constant (which could be zero), while $\omega_r$ for each $r=2,\ldots,p$ is either a function of sine type or identically equals to zero.
 
Let us first consider the case when $\omega_1$ is a function of sine type. Using \eqref{eq410} and Lemma \ref{Lemma 4.2new} (if needed, repeatedly), we conclude that $\omega_d$ is of sine type, and thus can be represented via its zeros as written in \eqref{4.2}.
However, from Lemma \ref{lemmaS} we know that $\omega_c$ is not of sine type. This implies that for a zero of $\omega_c$ statement \eqref{notzeros} holds, which concludes the proof of the theorem in this case.

Let us now consider the case when $\omega_1(\sql)=c$ for all $\sql$ and $c\neq0$.
We take the sequence $\sqrt{\lambda^{(1)}_k}=(\pi k)/\ell+o(1)$ as $k\to\infty$ of zeros of 
the function $\omega_c$ described in equation \eqref{3.2} of Theorem \ref{Theorem 3.1}. It follows from \eqref{eq49} and \eqref{eq410} that $\lim_{k\to\infty}\omega_d(\sqrt{\lambda^{(1)}_k})=\lim_{k\to\infty}\omega_1(\sqrt{\lambda^{(1)}_k})=c\neq0$ and thus \eqref{Ntz} holds for $\lambda_{2^p-1}:=\lambda_k^{(1)}$ with $k$ large enough completing the proof in this case. 

Let us now consider the case when $\omega_1(\sql)=0$ for all $\sql$. Multiplying \eqref{eq410} by $\sql$ yields
$\sql\omega_d(\sql)=\omega_2(\sql)+\ldots+(\sql)^{-(p-2)}\omega_p(\sql)$. In the case when $\omega_2$ is not identically equal to zero the application of Lemma \ref{Lemma 4.2new} (if needed, repeated) shows that $\sql\mapsto\sql\omega_d(\sql)$ is of sine type. However, $\sql\mapsto\sql\omega_c(\sql)$ is not of sine type by Lemma \ref{lemmaS}, and thus \eqref{notzeros} follows from \eqref{4.2}. In the case when $\omega_2$ is identically equal to zero we multiply \eqref{eq410} by $(\sql)^2$, and apply the same argument iductively.
\end{proof}

\end{document}